\newcommand{\Rmo}{\mathbb{R}^{m+1}}
\newcommand{\C}{\mathbb{C}}
\newcommand{\R}{\mathbb{R}}
\newcommand{\Clm}{\mathcal{C}l_m}
\newcommand{\CL}{\mathcal{L}}
\newcommand{\Su}{\mathbb{S}}
\newcommand{\be}{\begin{eqnarray*}}
\newcommand{\ee}{\end{eqnarray*}}
\newcommand{\ux}{\underline{\bx}}
\newcommand{\Real}{\text{Re}}
\newcommand{\bo}{\boldsymbol}
\newcommand{\bx}{\boldsymbol{x}}
\newcommand{\bq}{\boldsymbol{q}}
\newcommand{\boe}{\boldsymbol{e}}
\newcommand{\ubq}{\underline{\bq}}
\newcommand{\ubx}{\underline{\bx}}
\newcommand{\bv}{\bigg\vert}
\theoremstyle{thmstyleone}%
\newtheorem{theorem}{Theorem}[section]
\newtheorem{corollary}[theorem]{Corollary}
\newtheorem{lemma}[theorem]{Lemma}
\newtheorem{proposition}[theorem]{Proposition}
\theoremstyle{thmstyletwo}%
\newtheorem{remark}{Remark}%
\theoremstyle{thmstylethree}%
\newtheorem{definition}{Definition}%
\begin{document}

\title[Teodorescu transform for slice monogenic functions and applications]{Teodorescu transform for slice monogenic functions and applications}

\author*[1]{\fnm{Chao} \sur{Ding}}\email{cding@ahu.edu.cn}

\author[2]{\fnm{Zhenghua} \sur{Xu}}\email{zhxu@hfut.edu.cn}

\affil*[1]{\orgdiv{Center for Pure Mathematics, School of Mathematical Sciences}, \orgname{Anhui University}, \orgaddress{\street{Jiulong Road 111}, \city{Hefei}, \postcode{230601}, \state{Anhui}, \country{China}}}

\affil[2]{\orgdiv{School of Mathematics}, \orgname{Hefei University of Technology}, \orgaddress{\street{Danxia Road 485}, \city{Hefei}, \postcode{230601}, \state{Anhui}, \country{China}}}

\abstract{In the past few years, the theory of slice monogenic functions has been fully developed, especially in its applications to the noncommutative $S$-functional calculus. In this article, we introduce the Teodorescu transform in the theory of slice monogenic functions, which turns out to be a right inverse of the slice Cauchy-Riemann operator. The boundednesses of the Teodorescu transform and its derivatives are investigated as well. These results successfully generalize the classical results in the complex plane to higher dimensions. As applications, a Hodge decomposition of the Banach space of slice $L^p$ functions and the corresponding generalized Bergman projection are investigated.}

\keywords{Slice Cauchy-Riemann operator, Teodorescu transform, Hodge decomposition, Generalized Bergman projection}

\pacs[MSC Classification]{30G35, 32A30, 44A05}

\maketitle
\section{Introduction}
The importance of complex analysis for mathematical physics is that the Cauchy-Riemann operator and its complex conjugate operator provide a factorization for the two-dimensional Laplace operator. The Laplace operator arises from various applications in many partial differential equations, for instance, Poisson equation, wave equation, heat equation, etc. In the case of the in-homogeneous Cauchy-Riemann equation, it leads to the well-known Cauchy-Pompeiu integral representation formula. In particular, the area integral in the complex Cauchy-Pompeiu integral representation defines a singular integral operator, which is usually called the Teodorescu transform. This transform is considered as the right inverse of the Cauchy-Riemann operator.
\par
The Teodorescu transform plays an important role in the Vekua theory in solving certain first order partial differential equations, such as the Beltrami equation, see \cite{Vekua}. Many researchers have contributed to the study of this topic in the context of Clifford analysis. For instance, Begehr and Hile \cite{B1,B2}, Wen and Begehr \cite{B3}, and Bojarski \cite{Bo} studied the properties of the Teodorescu transform and a Ahlfors-Beurling $\Pi$-operator for solving complex first order partial differential equations. K\"ahler \cite{Kah} studied the Teodorescu transform and the Beltrami equation in the quaternionic case. G\"urlebeck et al. \cite{Gur} investigated a class of generalized complex $\Pi$-operator in hyperholomorphic function theory. A systematic study of the Teodorescu transform in classical Clifford analysis can be found in \cite{23}. The Teodorescu transform in the octonionic space was investigated  by Wang and Bian \cite{Haiyan} in 2017. More work has been done on $\Pi$ operator via the Teodorescu transform, cf. \cite{GK,PU}.

In 2006, Gentili and Struppa introduced the concept of slice regular functions over quaternions \cite{19,20}, which was inspired by an earlier work done by Cullen in \cite{9}. This work attracted the attention of many researchers to a systematic investigation of the theory of slice regular functions. In 2009, Colombo, Sabadini and Struppa \cite{6} generalized this idea to the general higher dimensions with the concept of slice monogenic functions. In \cite{6}, slice monogenic functions are defined as functions which are holomorphic on each slice in the Euclidean space, and the theory of slice monogenic functions has been well-developed so far, see, e.g. \cite{7}.
\par
The theory of slice regular and monogenic functions also has deep motivation in quantum physics. Back in 1930s, with the paper of Birkhoff and von Neumann on the logic of quantum mechanics, quantum mechanics can be formulated over the real, the complex, and the quaternionic numbers. The main problem is that the precise definition of the spectrum of  quaternionic linear operators is unclear. In 2006, the discovery of $S$-spectrum by Colombo and Sabadini in the theory of slice hyperholomorphic functions became the underlining function theory on which new functional calculi for quaternionic operators and for $n$-tuples of operators have been developed. Properties and formulations of the quaternionic functional calculus were investigated by the same authors, see \cite{CS1,CS2}. Since then, the quaternionic spectral theory based on the notion of $S$-spectrum has been well-developed, and much work has contributed to this topic, see e.g. \cite{ACS,CGK,7,CG,GMP,GMP1}.
\par
In 2011, Ghiloni and Perotti \cite{15} investigated the theory of slice regular functions on real alternative algebras with a different approach by introducing concepts of stem functions and slice functions. The theory of slice regular functions has been developed even further with this approach. For instance, the theory of slice regular functions has been developed into slice Fueter-regular functions over octonions \cite{14, 26}. More details on slice  regular and monogenic functions can be found, for instance, in \cite{7,13}.
\par
In 2013, Colombo et al. \cite{4} introduced a non-constant coefficients differential operator, whose null solutions are closely related to slice monogenic functions when the domains are given with suitable conditions.
Later on, many researchers started to investigate this global differential operator for slice regularity. For instance, in \cite{18}, Ghiloni and Perotti introduced a volume Cauchy integral formula and a volume Borel-Pompeiu formula for slice regular functions on real associative *-algebras. In \cite{22}, a global Borel-Pompeiu formula and a global Cauchy-type formula for the non-constant coefficients differential operator were presented for quaternionic slice regular functions. In \cite{11}, the authors introduced a Borel-Pompeiu formula and a slice regular extension result for the slice Cauchy-Riemann operator over Clifford algebras, which provides necessary tools for the investigation done in this article.  In this paper, we shall use the Cauchy kernel in the slice monogenic setting to define a Teodorescu transform, which turns out to be a right inverse of the slice Cauchy-Riemann operator. Some other properties of this transform will be studied as well.
\par
This paper is organized as follows. In Section 2, we will review some preliminaries on slice monogenic functions. Section 3 is devoted to an investigation into boundednesses of the Teodorescu transform and its derivatives. The fact that the Teodorescu transform is a right inverse of the slice Cauchy-Riemann operator is also studied here. In Section 4, a Hodge decomposition of the Banach space of slice $L^p$ functions is introduced, which gives rise to a generalized Bergman projection in the theory of slice monogenic functions.
\section{Preliminaries}
In this section, we review some definitions and preliminary results on slice Clifford analysis, for more details, we refer the readers to \cite{7,13,15}.
\par
Let $\mathfrak{B}=\{\bo{e}_1,\ldots,\bo{e}_m\}$ be a standard orthonormal basis of the $m$-dimensional Euclidean space $\R^m$. The real Clifford algebra $\Clm$ is generated by $\mathfrak{B}$ by considering the relationship
$$\bo{e}_i\bo{e}_j+\bo{e}_j\bo{e}_i=-2\delta_{ij},$$
where $\delta_{ij}$ is the Kronecker delta function. Hence, each Clifford number $x\in \Clm$ can be written as $x=\sum_{A}x_Ae_A$ with real coefficients $x_A\in \mathbb{R}$ and $A\subset\{1,\ldots,m\}$. It also implies that $\Clm$ can be considered as a $2^m$ dimensional vector space with a basis $\{e_A\},\ A\subset\{1,\ldots,m\}$. We introduce a norm of $x=\sum_{A}x_Ae_A$ as $|x|=(\sum_{A}x_A^2)^{\frac{1}{2}}.$
 If we denote $\Clm^{k}=\{x\in \Clm:\ x=\sum_{|A|=k}x_Ae_A\}$, where $|A|$ stands for the cardinality of the set $A$, then one obtains a decomposition $\Clm=\bigoplus_{k=0}^m \Clm^k$. In particular, we call elements in $\Clm^0$ scalars and elements in $\Clm^1$ $1$-vectors. It is easy to see that the $(m+1)$-dimensional Euclidean space $\R^{m+1}=\R\oplus\R^m$ can be identified with $\Clm^0\oplus \Clm^1$, whose elements are also called paravectors, and $\R$ is identified with the vector subspace of $\Clm$ generated by $\bo{e}_{\emptyset}=1$.
 \par
 The Clifford conjugation of $ x=\sum_{A}x_Ae_A \in \Clm$ is defined by
\begin{align*}
\overline{x}=\sum_{A}(-1)^{\frac{|A|(|A|+1)}{2}}x_Ae_A.
\end{align*}
\par
In this article, a $1$-vector is denoted by $\ubx=\sum_{k=1}^mx_k\boe_k$. Then for a paravector $\bx\notin\R$, we can write it as $\bx=x_0+\ubx=:\Real[\bx]+\displaystyle\frac{\ubx}{|\ubx|}|\ubx|=:u+I_{\bx}v$, where $ u=x_0,\ v=|\ubx|=\bigg(\displaystyle\sum_{j=1}^mx_j^2\bigg)^{\frac{1}{2}}$ and $I_{\bx}=\displaystyle\frac{\ubx}{|\ubx|}$. When $\bx\in \R$, $I_{\bx}$ can be chosen to be an arbitrary unit vector in the $(m-1)$-unit sphere $\Su$, where $\Su$ is defined by
\be
\Su:=\{\ubx=\boe_1x_1+\cdots+\boe_mx_m\in\Clm^1:\ x_1^2+\cdots +x_m^2=1\}.
\ee
Given an $I\in\Su$, let $\C_I$ be the plane spanned by $1$ and $I$, which is isomorphic to the complex plane. Hence, arbitrary element $\bx\in \C_I$ can be denoted by $\bx=u+Iv$ with $u,v\in\R$. Recall that any paravector $\bx$ can be written as $\bx=u+I_{\bx}v$, this suggests that we can write it in a form as an element in some complex plane $\C_I$.
\par
Given a paravector $\bo{x}=x_0+\underline{\bo{x}}=x_0+I_{\bo{x}}|\underline{\bo{x}}|\in\Rmo$, we introduce a set
\be
[\bo{x}]=\{x_0+I|\underline{\bo{x}}|\ :\ I\in\Su\}.
\ee
Indeed, one can easily see that $[\bo{x}]$ is either a point (when $\bo{x}\in\R$) or the $(m-1)$-sphere with center at $x_0$ and radius $|\underline{\bo{x}}|$.
\begin{definition}
Let $\Omega\subset\Rmo$ be a domain and $f:\ \Omega\longrightarrow \Clm$ be a real differentiable function. Given any $I\in\Su$, we denote by $f_I$ the restriction of $f$ to $\Omega_I:=\Omega\cap\C_I$. The function $f$ is called \emph{left slice monogenic}, if for all $I\in\Su$, we have
$
\frac{1}{2}\big(\frac{\partial}{\partial u}+I\frac{\partial}{\partial v}\big)f_I(u+Iv)=0
$
on $\Omega_I$. Due to the noncommutativity of multiplication of Clifford numbers, we can also define \emph{right $s$-monogenicity} by requiring that $
f_I(u+Iv)\frac{1}{2}\big(\frac{\partial}{\partial u}+I\frac{\partial}{\partial v}\big)=0
$
on $\Omega_I$ for all $I\in\Su$.
\end{definition}
Next, we review the approach to slice regularity with the concept of \emph{stem functions}, which was introduced in \cite{15}.
\begin{definition}
Let $D\subset\mathbb{C}$ be invariant with respect to complex conjugation, i.e., if $z=u+iv\in D$, then its complex conjugation $\overline{z}=u-iv\in D$ as well.  A function $F:\ D\longrightarrow \Clm\otimes\C$ satisfying $F(\overline{z})=\overline{F(z)}$ for all $z\in D$ is called a \emph{stem function} on $D$.
\end{definition}
Notice that a complex Clifford-valued stem function $F(z)\in \Clm\otimes\C$ can be written as $F(z)=F_1(z)+iF_2(z)$, where $F_1(z), F_2(z)\in \Clm$. Then, $F_1$ and $F_2$ satisfy the following even-odd conditions:
\begin{align}\label{evenodd}
F_1(u,-v)=F_1(u,v),\ F_2(u,-v)=-F_2(u,v).
\end{align}
\par
Let $J\in\Su$, there is a natural algebra isomorphism from the complex plane to $\C_J$ given by
\begin{align*}
&\Phi_J: \C\longrightarrow \C_J,\\
&u+iv\mapsto u+Jv.
\end{align*}
\par
In order to have a nice theory for slice monogenic functions, one has to put some restrictions on the open sets $\Omega\subset\R^{m+1}$.
\begin{definition}
 Given an open set $D\subset\C$, denote
\begin{align*}
\Omega_D=\bigcup_{J\in\Su}\Phi_J(D)\subset\R^{m+1}.
\end{align*}
If the open set $\Omega\subset\R^{m+1}$ satisfies the form $\Omega=\Omega_D$ for some $D\subset\C$, then we say it is \emph{axially symmetric}.
\end{definition}

Now, we can construct a function defined on $\R^{m+1}$ from a stem function  defined on the complex plane as follows.

\begin{definition}
 Let $F(z)=F_1(z)+iF_2(z)$ be a stem function defined on $D$ with $F_1,F_2:\ D\longrightarrow \Clm$ and $z=u+iv$. Then, $F(z)$ induces a \emph{(left) slice function} $f=\mathcal{I}(F):\Omega_D\longrightarrow \Clm$, defined by
\begin{align*}
f(\bx)=F_1(z)+JF_2(z),\quad \bx=u+Jv=\Phi_J(z)\in\Omega_D\cap \C_J.
\end{align*}
\end{definition}
\begin{remark}
On one hand, the definition above shows that a stem function induces a slice function. On the other hand, a slice function is induced by a unique stem function. Indeed, if a slice function $f(\bx)$ is induced by two stem functions $F(z)=F_1(z)+iF_2(z)$ and $H(z)=H_1(z)+iH_2(z)$. Then, we have that $(F_1-H_1)(z)+J(F_2-H_2)(z)=0$. Notice that $F_1-H_1$ and $F_2-H_2$ satisfy the even-odd conditions \eqref{evenodd}, in other words,
\begin{align*}
(F_1-H_1)(z)=(F_1-H_1)(\overline{z}),\ -(F_2-H_2)(z)=(F_1-H_1)(\overline{z}),
\end{align*}
which immediately gives that $0=(F_1-H_1)(\overline{z})+J(F_2-H_2)(\overline{z})=(F_1-H_1)(z)-J(F_2-H_2)(z)$. Hence,   $F_1=H_1,\ F_2=H_2$ in $\Omega_D$, which certifies the fact that a slice function is induced by a unique stem function.
\end{remark}
Denote the set of (left) slice functions on $\Omega_D$ by
\begin{align*}
\mathcal{S}(\Omega_D):=\{f:\ \Omega_D\rightarrow \Clm\ |\ f=\mathcal{I}(F),\ F:\ D\rightarrow\Clm\otimes\C\ \text{stem function}\}.
\end{align*}

An important property of the slice function is the following representation formula.
 \begin{theorem}[Representation formula]\cite{15}\label{Rep}
Let $D\subset\C$ be a domain, which is invariant under complex conjugation,  and let $\Omega_D\subset\Rmo$ be an axially symmetric domain. Further, let $f:\ \Omega_D\longrightarrow \Clm$ be a slice function. Then, for any $I\in\Su$ and $\bx=u+I_{\bx}v\in\Omega_D$, where $I_{\bx}\in\Su$, we have
\begin{align*}
f(\bx)=\frac{1-I_{\bx}I}{2}f(u+Iv)+\frac{1+I_{\bx}I}{2}f(u-Iv).
\end{align*}
\end{theorem}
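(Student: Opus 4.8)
The plan is to reduce the statement to the very definition of a slice function and then finish with a short computation inside $\Clm$. Write $f=\mathcal{I}(F)$ with stem function $F=F_1+iF_2$, where $F_1,F_2:D\to\Clm$ and we use the splitting $\Clm\otimes\C=\Clm\oplus i\Clm$. The first step is to unwind the symmetry condition $F(\bar z)=\overline{F(z)}$ defining a stem function: reading it componentwise (complex conjugation on $\Clm\otimes\C$ fixes $\Clm$ and sends $i$ to $-i$), it says exactly that $F_1(\bar z)=F_1(z)$ and $F_2(\bar z)=-F_2(z)$, i.e.\ $F_1$ is even and $F_2$ is odd under $z\mapsto\bar z$.

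Next I would fix $\bx=u+I_{\bx}v\in\Omega_D$, put $z=u+iv$, and note that since $\bx\in\Omega_D=\cup_{J\in\Su}\Phi_J(D)$ we have $z\in D$, and because $D$ is invariant under complex conjugation also $\bar z=u-iv\in D$. Hence, for the chosen $I\in\Su$, both $u+Iv=\Phi_I(z)$ and $u-Iv=\Phi_I(\bar z)$ lie in $\Omega_D\cap\C_I$, so all three evaluations appearing in the statement are legitimate. Applying the definition $f=\mathcal{I}(F)$ and the parity of $F_1,F_2$ gives
\begin{align*}
f(\bx) &= F_1(z)+I_{\bx}F_2(z),\\
f(u+Iv) &= F_1(z)+IF_2(z),\\
f(u-Iv) &= F_1(\bar z)+IF_2(\bar z)=F_1(z)-IF_2(z).
\end{align*}

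Finally I would substitute these expressions into the right-hand side of the claimed identity. The delicate point — and really the only thing to watch in the whole argument — is noncommutativity: the scalar-like factors $\tfrac{1\mp I_{\bx}I}{2}$ act on the left, while $I$ multiplies $F_2(z)$ on the left, so the order must be tracked. The $F_1(z)$-contributions add up to $\bigl(\tfrac{1-I_{\bx}I}{2}+\tfrac{1+I_{\bx}I}{2}\bigr)F_1(z)=F_1(z)$. For the $F_2(z)$-contributions one uses $I^2=-1$ to compute $(1-I_{\bx}I)I=I+I_{\bx}$ and $(1+I_{\bx}I)(-I)=I_{\bx}-I$, whence these terms sum to $\tfrac12\bigl((I+I_{\bx})+(I_{\bx}-I)\bigr)F_2(z)=I_{\bx}F_2(z)$. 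Adding the two pieces yields $F_1(z)+I_{\bx}F_2(z)=f(\bx)$, which is the assertion. The degenerate configurations are automatically covered by the same computation: when $v=0$ the point $\bx=u$ is real, $\bar z=z$ forces $F_2(u)=0$, and $I_{\bx}$ is immaterial; when $I_{\bx}=\pm I$ one of the two coefficients equals $1$ and the other $0$, and the formula collapses to a tautology.
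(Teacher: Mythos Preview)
Your argument is correct: unpacking the parity of $F_1,F_2$ from the stem condition and then doing the two-line computation in $\Clm$ is exactly the standard route to the representation formula, and you handle the noncommutativity carefully.

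Note, however, that the paper does not supply its own proof of this statement; it is quoted as a known result from \cite{GPe} and used as a tool in later arguments. So there is no ``paper's proof'' to compare against here. Your write-up is essentially the proof one finds in the Ghiloni--Perotti framework, and nothing more is needed.
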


Denote the real vector space of slice functions with stem function of class $C^1$ by
\begin{align*}
\mathcal{S}^1(\Omega_D):=\{f=\mathcal{I}(F)\in\mathcal{S}(\Omega_D)\ |\ F\in C^1(D)\}.
\end{align*}
\begin{definition}
Let $D\subset\C$ be a domain, which is invariant under complex conjugation, and $\Omega_D\subset\Rmo$ be an axially symmetric domain. Then, a slice function $f:\ \Omega_D\longrightarrow \Clm$ is called \emph{slice regular (or slice monogenic)} if its stem function $F=F_1+iF_2:\ D\longrightarrow \Clm\otimes\C$ is holomorphic, i.e., its components $F_1,F_2$ satisfy the Cauchy-Riemann equations:
\begin{align*}
\frac{\partial F_1}{\partial u}(z)=\frac{\partial F_2}{\partial v}(z),\ \frac{\partial F_1}{\partial v}(z)=-\frac{\partial F_2}{\partial u}(z),\quad z=u+iv\in D.
\end{align*}
\end{definition}
In \cite{4}, the authors introduced a non-constant coefficients differential operator, which has close connections to slice regular functions. Indeed, the space of slice monogenic (or regular) functions coincides with the kernel space of the  slice Cauchy-Riemann operator $G$ when the domain does not intersect the real line, where $G$ is defined by
\begin{align*}
G=\frac{\partial}{\partial x_0}+\frac{\ubx}{|\ubx|^2}\sum_{j=1}^{m}x_j\frac{\partial}{\partial x_j}.
\end{align*}
This differential operator is slightly different from the operator given in \cite{4} by a factor $|\ubx|^2$, which needs us to be more careful when the domain intersect the real line, which creates singularities for the term $|\ubx|^2$ in the differential operator. Hence, we introduce the notation $\R^{m+1}_*:=\R^{m+1}\backslash{\R}$ for the rest of this article. One might also notice that the operator $G$ also coincides with the operator $2\overline{\vartheta}$ given in \cite{GP}.
\par
Now, we recall the Cauchy kernel for slice monogenic functions as
\begin{align*}
S^{-1}(\bq,\bx)=-(\bq^2-2\Real[\bx]\bq+|\bx|^2)^{-1}(\bq-\overline{\bx}),
\end{align*}
where $\bq^2-2\Real[\bx]\bq+|\bx|^2\neq 0$. This function is left slice monogenic in the variable $\bq$ and right slice monogenic in the variable $\bx$ in its domain of definition, respectively. More details can be found in \cite{7}. The Cauchy kernel for the global slice Cauchy-Riemann operator $G$ was given in \cite[Corollary 2.8]{18} in the following form
\begin{align*}
K(\bq,\bx)=\frac{2S^{-1}(\bq,\bx)}{\omega_{m-1}|\ubx|^{m-1}},
\end{align*}
where $\omega_{m-1}$ is the area of the $(m-1)$-sphere $\Su$. Now, we introduce a Borel-Pompeiu integral formula for the slice Cauchy-Riemann operator given in \cite{18} as follows. The theorem is stated with $G$, since the conjugate slice derivative operator $\frac{\partial}{\partial \bx^c}$ introduced in \cite[Definition 7]{15} equals to $\frac{G}{2}$, when acting on slice functions defined on $\Omega_D\subset\R^{m+1}_*$.
\begin{theorem}[Borel-Pompeiu formula] \cite[Corollary 2.8]{18} \label{BPF} Let $\Omega_D\subset\R^{m+1}_*$ be a bounded axially symmetric domain with smooth boundary $\partial\Omega_D$. If $f:\Omega\longrightarrow \Clm$ is a function in $\mathcal{S}^1(\overline{\Omega_D})$, where $\overline{\Omega_D}$ denotes the Euclidean closure of $\Omega_D$ in $\R^{m+1}$. Then, for any $\bq\in\Omega_D$, we have
\begin{align*}
\int_{\partial\Omega_D}K(\bq,\bx)n({\bx}) f(\bx)d\sigma(\bx)-\int_{\Omega_D}K(\bq,\bx)(Gf)(\bx)dV(\bx)
=2\pi f(\bq),
\end{align*}
where $n(\bx)$ is the outward unit normal vector to the boundary $\partial\Omega_D$, $d\sigma$ is the area element on $\partial\Omega_D$ and $dV$ is the volume element in $\Omega_D$.
In particular, if $Gf(\bx)=0$ for $\bx\in\Omega_D$, we have
\begin{align*}
2\pi f(\bq)=\int_{\partial\Omega_D}K(\bq,\bx)n({\bx}) f(\bx)d\sigma(\bx).
\end{align*}
\end{theorem}
Once we notice that the kernel $K(\bq,\bx)$ is arbitrarily often continuously differentiable with respect to $q_i$ (up to a set with measure zero), we can immediately obtain a Cauchy integral formula for derivatives as follows.
\begin{theorem}[Cauchy integral formula for derivatives]\label{CID}
 Let $\Omega_D\subset\R^{m+1}_*$ be a bounded axially symmetric domain with smooth boundary $\partial\Omega_D$. If $f:\Omega_D\longrightarrow \Clm$ is a function in $C^1(\overline{\Omega_D})\cap \mathcal{S}(\Omega_D)$ and $Gf(\bx)=0$ for $\bx\in\Omega_D$. Then, we have
 \begin{align*}
	2\pi\nabla_{\bq}^{\bo{l}}f(\bq)=\int_{\partial\Omega_D}K_{\bo{l}}(\bq,\bx)n({\bx}) f(\bx)d\sigma(\bx),
\end{align*}
where $\nabla_{\bq}=(\partial_{q_0},\ldots,\partial_{q_m})$ is the gradient, $\bo{l}=(l_0,\ldots,l_m)\in \mathbb{N}^{m+1}$ is a multi-index, $\nabla_{\bq}^{\bo{l}}:=(\partial_{q_0}^{l_0},\ldots,\partial_{q_m}^{l_m})$ and $K_{\bo{l}}(\bq,\bx)=\nabla_{\bq}^{\bo{l}}K(\bq,\bx)$.
\end{theorem}
\begin{remark}
Here, we want to point out that $C^1({\Omega_D})\cap \mathcal{S}(\Omega_D)\neq \mathcal{S}^1({\Omega_D})$. Indeed, in \cite{15}, the authors proved that
$\mathcal{S}^1({\Omega_D})\subset C({\Omega_D})$, but $\ \mathcal{S}^1({\Omega_D})\not\subset C^1({\Omega_D})$, see \cite[Example 4.3]{XS} with $(p,q)=(0,n)$.
\end{remark}
Denote
\begin{align*}
&T_{\Omega_D}f(\bq)=-\frac{1}{2\pi}\int_{\Omega_D}K(\bq,\bx)f(\bx)dV(\bx),\\
&F_{\partial\Omega_D}f(\bq)=\frac{1}{2\pi}\int_{\partial\Omega_D}K(\bq,\bx)n({\bx}) f(\bx)d\sigma(\bx).
\end{align*}
Here, $T_{\Omega_D}$ is usually called the \emph{Teodorescu transform}.
Then the equation in Theorem \ref{BPF}  can be written as
\begin{align*}
	F_{\partial\Omega_D}f(\bq)+T_{\Omega_D}(Gf)(\bq)=f(\bq),\quad \bq\in\Omega_D.
\end{align*}
  In particular, for the  function $f$ with compact support in $\Omega_D$, we have that
\begin{align*}
	T_{\Omega_D}(Gf)(\bq)=f(\bq), \quad \bq\in\Omega_D,
\end{align*}
which suggests that $T_{\Omega_D}$ is a left inverse of $G$ when acts on certain function spaces. This gives rise to a natural question that whether $T_{\Omega_D}$ is also a right inverse of $G$. Thus, the question to study $GT_{\Omega_D}$ arises.

\section{Properties of the Teodorescu transform}
 In this section, we investigate the boundedness of the operator $T_{\Omega_D}$. Let $L^p(\Omega_D)$ be the classical Lebesgue $L^p$ space on $\Omega_D$.
From now on,   the symbol $C$  represents some positive  constant, which is independent of the function $f$ and might varies in different places, and we will not specify the parameters $m,p,\epsilon_o,\epsilon_1,a,b,\bq$ that the constant $C$ depends on unless it depends on the domain $\Omega_D$.
\begin{proposition}\label{BoundT}
Let $\Omega= {\Omega_D}\subset\R^{m+1}_*$ be a bounded axially symmetric domain and $f\in L^p(\Omega_D)$ with   $p>\max\{m,2\}$. Then we have
\begin{enumerate}
\item the integral $T_{\Omega_D}f(\bq)$ exists everywhere in $\R^{m+1}$;
\item $GT_{\Omega_D}f=0$ in $\R^{m+1}_*\backslash
\overline{\Omega_D}$;
\item
$
||T_{\Omega_D}f||_{L^p}\leq C(\Omega_D)||f||_{L^p}.
$
\end{enumerate}
\end{proposition}
\begin{proof}
 Firstly, we notice that by Theorem \ref{BPF}
\begin{align*}
{2\pi}|T_{\Omega_D}f(\bq)|=&\bv\int_{\Omega_D}K(\bq,\bx)f(\bx)dV(\bx)\bv\\
\leq &\bigg[\int_{\Omega_D}|K(\bq,\bx)|^{p'}dV(\bx)\bigg]^{\frac{1}{p'}}||f||_{L^p},
\end{align*}
where $\frac{1}{p}+\frac{1}{p'}=1,\ p>1$. Now, we consider from Theorem \ref{Rep}
\begin{align*}
&\int_{\Omega_D}|K(\bq,\bx)|^{p'}dV(\bx)\\
=&C\int_{\mathbb{S}^+}\int_{\Omega_I}\bv\frac{S^{-1}(\bq,\bx)}{|\ubx|^{m-1}}\bv^{p'}|\ubx|^{m-1}dV_I(\bx)dS(I)\nonumber\\
=&C \int_{\mathbb{S}^+}\int_{\Omega_I}\bv \alpha\frac{1}{\bx-\bq_I}+\beta\frac{1}{\bx-\bq_{-I}}\bv^{p'}|\ubx|^{-(m-1)(p'-1)}dV_I(\bx)dS(I)\nonumber\\
\leq& C\int_{\mathbb{S}^+}\int_{\Omega_I}(|\bx-\bq_I|^{-p'}+|\bx-\bq_{-I}|^{-p'})|\ubx|^{-(m-1)(p'-1)}dV_I(\bx)dS(I),
\end{align*}
where $\bq=q_0+I_{\bq}|\underline{\bq}|$, $\bq_{\pm I}=q_0\pm I|\underline{\bq}|$, $\alpha=\frac{1-I_{\bq}I}{2},\ \beta=\frac{1+I_{\bq}I}{2}$, $dV_I (x)$ stands for the area element on the complex plane $\mathbb{C}_I$ and $dS(I)$ is the area element on the half unit sphere $\mathbb{S}^+:=\{\bx\in\mathbb{S}: x_m>0\}$. Next, we only need to verify that the integral
\begin{align}\label{BTE}
\Psi=\int_{\mathbb{S}^+}\int_{\Omega_I}|\bx-\bq_I|^{-p'}|\ubx|^{-(m-1)(p'-1)}dV_I(\bx)dS(I)
\end{align}
is finite for $p>\max\{m,2\}$, the argument for the other one is similar.

It is worth pointing out that the inner integral in \eqref{BTE} is independent of $I$, which implies that
\begin{align*}
 \Psi=\Psi(q_{0},|\underline{\bq}|)=C\int_{\Omega_I}|\bx-\bq_I|^{-p'}|\ubx|^{-(m-1)(p'-1)}dV_I(\bx).
\end{align*}
For $I\in \mathbb{S}$, denote
\begin{align*}
E_I=\{u+Iv: r<u<R,\ -M<v<M\}.
\end{align*}
Notice that $\Omega=\Omega_D\subset\R^{m+1}_*$ is bounded, then there exist $r,R,M \in\R$ such that $ \Omega_I\subset E_{I}$. Hence, we have
\begin{align}\label{EI}
\Psi \leq C \int_{E_I}|\bx-\bq_I|^{-p'}|\ubx|^{-(m-1)(p'-1)}dV_I(\bx).
\end{align}
   The singularities of the integral \eqref{EI} occur in the following two cases.
\begin{enumerate}
\item The singular point $\bx=\bq_I$,
\item Points $\bx$ on the real line, which have $\ubx=0$.
\end{enumerate}
Let $B(\bq_I,\epsilon_0)=\{ z=u+Iv: |z-\bq_I|< \epsilon_0\}\subset E_I$ be a neighborhood of $\bq_I$ in the plane $\mathbb{C}_I$ with a sufficiently small $\epsilon_0$ and $E_{I}(\epsilon_1)=\{u+Iv\in E_I:\ r<u<R,\ -\epsilon_1<v<\epsilon_1\}$ be a strip neighborhood of the interval $(r,R)$ in $E_I$. The finiteness of the integral \eqref{EI} is equivalent to
\begin{align*}
\int_{B(\bq_I,\epsilon_0)\cup E_{I}(\epsilon_1) }|\bx-\bq_I|^{-p'}|\ubx|^{-(m-1)(p'-1)}dV_I(\bx)<+\infty,
\end{align*}
which is easy to obtain with applying spherical coordinates to the integral over $B(\bq_I,\epsilon_0)$  and Cartesian coordinates to the integral over $E_{I}(\epsilon_1)$. Indeed, on the one hand, we let $\bx=\bq_I+re^{I\theta}$, then we have, for $p>2$,
\begin{align*}
&\int_{B(\bq_I,\epsilon_0)}|\bx-\bq_I|^{-p'}|\ubx|^{-(m-1)(p'-1)}dV_I(\bx)\\
\leq& C(\Omega_D)\int_0^{\epsilon_0}\int_{0}^{2\pi}r^{1-p'}drd\theta<+\infty.
\end{align*}
 On the other hand,   denote $\eta=|\ubx|$, then we have for $p>m$
\begin{align*}
&\int_{E_{I}(\epsilon_1)}|\bx-\bq_I|^{-p'}|\ubx|^{-(m-1)(p'-1)}dV_I(\bx)\\
\leq&C(\Omega_D)\int_{r}^R\int_{-\epsilon_1}^{\epsilon_1}\eta^{-(m-1)(p'-1)}dx_0d\eta<+\infty.
\end{align*}
\par
Therefore, for fixed $\bq\in\Omega_D$, we can easily have
\begin{align*}
|T_{\Omega_D}f(\bq)|\leq C(\Omega_D)||f||_{L^p},
\end{align*}
which immediately leads to the fact that $T_{\Omega_D}f(\bq)$ exists everywhere in $\R^{m+1}$.
\par
Moreover, since $T_{\Omega_D}f$ has no singular points in $\R^{m+1}_*\backslash
\overline{\Omega_D}$ and it is obvious that $GS^{-1}(\cdot,\bx)=0$, we immediately have that $GT_{\Omega_D}f=0$ in $\R^{m+1}_*\backslash
\overline{\Omega_D}$.
\par
With the previous argument for $|T_{\Omega_D}f(\bq)|$, we can see that estimating the $L^p$ norm of $T_{\Omega_D}f$ is equivalent to show the finiteness of the $L^p$ norm of the integral \eqref{BTE}. However, to prove the third statement, we can not use the estimate for $|T_{\Omega_D}f(\bq)|$ obtained above, since the constant $C$ depends on $\Omega_D$, which implies that we can not deal with the singularities as above for all $\bq\in\Omega_D$ uniformly when $\partial\Omega_D\cap\mathbb{R}\neq\emptyset$. Hence, we need a more subtle argument to estimate the $L^p$ norm of the integral \eqref{BTE}.
\par
Firstly, we choose a sufficiently large $r_1>0$, such that $\Omega_D\subset B^{m+1}(\bq,r_1)$ for any $\bq\in\Omega_D$, where $B^{m+1}(\bq,r_1)$ stands for the ball centered at $\bq$ with radius $r
_1$ in $\mathbb{R}^{m+1}$. Let $\bx=\bq_I+re^{I\theta}$, $v=|\underline{\bq}|$, and $B(\bq_I,r_1)=B^{m+1}(\bq,r_1)\cap\mathbb{C}_I$, then, we can see that $|\underline{\bx}|=|v+r\sin\theta|$. Hence,
\begin{align*}
\Psi\leq& C\int_{B(\bq_I,r_1)}|\bx-\bq_I|^{-p'}|\ubx|^{-(m-1)(p'-1)}dV_I(\bx)\\
=&C\int_0^{r_1}\int_{0}^{2\pi}r^{1-p'}|v+r\sin\theta|^{-(m-1)(p'-1)}drd\theta.
\end{align*}
\par
Next, let $a,b>0$ such that $\Omega_J\subset\{\bq=u+Jv\in\R^{m+1}:-a<u<a,0<v<b\}$ for all $J\in\mathbb{S}$. Now, we check the finiteness of the $L^p$ norm of the  integral above as follows.
\begin{align}\label{I12T}
&\int_{\Omega_D}|\Psi|^pdV(\bq)=C\int_{\Omega_I}|\Psi(q_{0},|\underline{\bq}|)|^p |\underline{\bq}|^{m-1}dV_{I}(\bq)\nonumber\\
\leq&C\int_{-a}^a\int_{0}^b\bigg\vert\int_0^{r_1}\int_{0}^{2\pi}r^{1-p'}|v+r\sin\theta|^{(m-1)(1-p')}drd\theta \bigg\vert^p v^{m-1}dudv\nonumber\\
\leq& C\int_{0}^b\bigg\vert\int_{0}^{2\pi}\int_0^{r_1}r^{1-p'}|v+r\sin\theta|^{(m-1)(1-p')}drd\theta \bigg\vert^p v^{m-1}dv\nonumber\\
\leq&C\int_{0}^b\bigg\vert\int_{0}^{\pi}\int_0^{r_1}r^{1-p'}|v+r\sin\theta|^{(m-1)(1-p')}drd\theta \bigg\vert^p v^{m-1}dv\nonumber\\
&+C\int_{0}^b\bigg\vert\int_{\pi}^{2\pi}\int_0^{r_1}r^{1-p'}|v+r\sin\theta|^{(m-1)(1-p')}drd\theta \bigg\vert^p v^{m-1}dv=:I_1+I_2.
\end{align}
Now, we estimate the integrals of $I_1$ and $I_2$, respectively.
\par
To estimate $I_1$, we notice that $v+r\sin\theta\geq 0$ and $(m-1)(1-p')<0$, which give us that $|v+r\sin\theta|^{(m-1)(1-p')}\leq v^{(m-1)(1-p')}$. Hence, we have
\begin{align*}
I_1\leq& C\int_{0}^b\bigg\vert\int_0^{r_1}r^{1-p'}v^{(m-1)(1-p')}dr \bigg\vert^p v^{m-1}dv\\
=&C\bigg(\int_0^{r_1}r^{1-p'}dr\bigg)^p\int_0^bv^{(m-1)(1-p')p+m-1}dv,
\end{align*}
and this integral converges only if $1-p'>-1$ and $(m-1)(1-p')p+m-1>-1$, which give    $p>\max\{m,2\}$.
\par
To estimate $I_2$, let $\rho=-\sin\theta>0$, without loss of generality, we assume that $0<v<\rho r_1$, otherwise, it is a special case of the following argument. Then, we have
\begin{align}\label{I12}
&\int_0^{r_1}r^{1-p'}|v-\rho r|^{(m-1)(1-p')}dr\nonumber\\
 =&\int_0^{\frac{v}{\rho}}r^{1-p'}(v-\rho r)^{(m-1)(1-p')}dr +\int_{\frac{v}{\rho}}^{r_1}r^{1-p'}(\rho r-v)^{(m-1)(1-p')}dr \nonumber\\
 =&:\Psi_1+\Psi_2.
\end{align}
To calculate $\Psi_1$, let $r=\frac{v}{\rho}\tau$, we have
\begin{align*}
\Psi_1=&\int_{0}^1\bigg(\frac{v}{\rho}\tau\bigg)^{1-p'}v^{(m-1)(1-p')}(1-\tau)^{(m-1)(1-p')}\frac{v}{\rho}d\tau\\
=&\rho^{p'-2}v^{2-p'+(m-1)(1-p')}\int_0^1\tau^{1-p'}(1-\tau)^{(m-1)(1-p')}d\tau\\
=&c_{m,p}\rho^{p'-2}v^{2-p'+(m-1)(1-p')},
\end{align*}
where $c_{m,p}=B(2-p',(m-1)(1-p')+1)$ is a Beta function and it converges if and only if $2-p'>0$ and $(m-1)(1-p')+1>0$, that is $p>\max\{m,2\}$.
\par
To   estimate $\Psi_2$, let $\mu=\rho r-v$, then we have
\begin{align*}
 \Psi_2 \leq & \bigg(\frac{v}{\rho}\bigg)^{1-p'}\int^{r_1}_{\frac{v}{\rho}}(\rho r-v)^{(m-1)(1-p')}dr\\
=&\rho^{p'-2}v^{1-p'}\int_0^{\rho r_1-v}\mu^{(m-1)(1-p')}d\mu\\
=&c'_{m,p}\rho^{p'-2}v^{1-p'}\mu^{(m-1)(1-p')+1}\bigg\vert_0^{\rho r_1-v}\\
=&c'_{m,p}\rho^{p'-2}v^{1-p'}(\rho r_1-v)^{(m-1)(1-p')+1},
\end{align*}
where $c'_{m,p}=\frac{1}{(m-1)(1-p')+1}$, and the last equation relies on the condition $(m-1)(1-p')>-1$, that is $p>m$. Plugging $\Psi_1$ and $\Psi_2$ into \eqref{I12} and \eqref{I12T}, we  obtain
$$I_2\leq C\int_0^b\bigg\vert\int_{\pi}^{2\pi}c_{m,p}\rho^{p'-2} v^{2-p'+(m-1)(1-p')}+c_{m,p}'v^{1-p'}|\rho r_1-v|^{(m-1)(1-p')+1}d\theta\bigg\vert^pv^{m-1} dv.
$$
Recall that when we estimate $\Psi_1$ and $\Psi_2$, we used the conditions
$$(m-1)(1-p')+1>0,\ p>\max\{m,2\},$$  which imply  for $0<v<b$
$$|\rho r_1-v|^{(m-1)(1-p')+1}\leq (r_1+b)^{(m-1)(1-p')+1}=:C_1(m,p),$$
Hence,
\begin{align*}
I_2\leq& C\int_0^b\bigg\vert\int_{\pi}^{2\pi}\rho^{p'-2} v^{2-p'+(m-1)(1-p')}+C_1(m,p)v^{1-p'}d\theta\bigg\vert^pv^{m-1} dv\\
=&C \int_0^b\bigg\vert\int_{\pi}^{2\pi}\rho^{p'-2}d\theta\cdot v^{2-p'+(m-1)(1-p')}+C_1'(m,p)v^{1-p'}\bigg\vert^pv^{m-1} dv\\
\leq& C\bigg(\int_0^b\bigg\vert \int_{\pi}^{2\pi}\rho^{p'-2}d\theta\bigg\vert^p v^{[2-p'+(m-1)(1-p')]p+m-1}dv+\int_0^bv^{(1-p')p+m-1}dv\bigg).
\end{align*}
The convergence of the integrals above requires that $$ [2-p'+(m-1)(1-p')]p+m-1>-1,\ (1-p')p+m-1>-1,$$
or 
 $$p>\frac{2m+1}{m+1},\ p>\frac{m}{m-1},$$
and the convergence of
\begin{align*}
\int_{\pi}^{2\pi}\rho^{p'-2}d\theta=\int_{\pi}^{2\pi}(-1)^{p'-2}\sin^{p'-2}\theta d\theta.
\end{align*}
These requirements hold  naturally   under  the condition  $p>\max\{m,2\}$, and we eventually prove the third statement.
\end{proof}
\begin{remark}
When $m=1$, in other words, it is the case of the complex plane, our result reduces to $p>2$, which coincides with the classical result in the complex plane, see \cite[Proposition 8.1]{23}.
\end{remark}
For convenience,  we denote the slice Teodorescu transform by
\begin{align*}
&T_{\Omega_I}f(\bq)=-\frac{1}{2\pi}\int_{\Omega_I}S^{-1}(\bq,\bx)f(\bx)dV_I(\bx).
\end{align*}
We need the following well-known theorem from measure theory for interchanging differentiation and integration.
\begin{theorem}\label{inter}\cite{12}
Suppose that $f:X\times[a,b]\longrightarrow\mathbb{C},\ (-\infty<a<b<+\infty)$ and $f(\cdot,t):X\longrightarrow\mathbb{C}$ is integrable for each $t\in [a,b]$. Let
\begin{align*}
F(t)=\int_{X}f(x,t)d\mu(x),
\end{align*}
and
\begin{enumerate}
\item Suppose that $\displaystyle\frac{\partial f}{\partial t}$ exists,
\item $\exists g\in L^1(\mu)$ such that $\bigg\vert\displaystyle\frac{\partial f}{\partial t}(x,t)\bigg\vert\leq g(x)$ for all $x$ and $t$.
\end{enumerate}
Then $F$ is differentiable and
\begin{align*}
F'(t)=\int_X\frac{\partial f}{\partial t}(x,t)d\mu(x).
\end{align*}
\end{theorem}
We also need the Gauss theorem for $\partial_{\bx_I}$  on a complex plane as follows.
\begin{theorem}\cite{23}\label{Gauss}
Let $\Omega_I\subset\mathbb{C}_I$ be a domain, and $f(\bx),\ g(\bx)\in C^1(\overline{\Omega_I})$. Then, we have
\begin{align*}
	\int_{\Omega_I}\big(f(\bx)\partial_{\bx_I}\big)g(\bx)+f(\bx)\big(\partial_{\bx_I}g(\bx)\big)dV_I(\bx)=\int_{\partial\Omega_I}f(\bx)\overline{d\bx^*}g(\bx),
\end{align*}
where $\overline{d\bx^*}=Id\bx$ and $d\bx$ is the line element on $\partial\Omega_I$.
\end{theorem}
\begin{remark}
The standard Gauss theorem is stated for the $\overline{\partial}$ operator. The Gauss theorem above can be considered as a conjugate version of the standard one, which can be obtained by taking the conjugate of the standard Gauss theorem.
\end{remark}
Now, we claim that $G$ is a left inverse for $T_{\Omega_I}$ as follows.
\begin{theorem}\label{thmOI}
Let $\Omega_D\subset \R_*^{m+1}$ be a  bounded axially symmetric domain, $f\in C^1(\overline{\Omega_D})$, then, for any $\bq\in\Omega_D$, we have
\begin{align*}
2\pi \partial_{q_0}T_{\Omega_I}f(\bq)&=\int_{\Omega_I}\frac{\partial S^{-1}(\bq,\bx)}{\partial q_0}f(\bx)dV_I(\bx)+\pi[\alpha f(\bq_I)+\beta f(\bq_{-I})],\\
2\pi \partial_{q_i}T_{\Omega_I}f(\bq)
&=\frac{q_i}{|\ubq|}\bigg[\int_{\Omega_I}\frac{\partial S^{-1}(\bq,\bx)}{\partial{\zeta}}f(\bx)dV_I(\bx)-\pi [\alpha I f(\bq_I)-\beta {I}f(\bq_{-I})]\bigg],\\
G_{\bq}T_{\Omega_I}f(\bq)&=\alpha f(\bq_I)+\beta f(\bq_{-I}),
\end{align*}
where $\bq=q_0+I_{\bq}|\underline{\bq}|$, $\zeta=|\underline{\bq}|$, $\bq_{\pm I}=q_0\pm I\zeta$, $G_{\bq}$ is the $G$ operator given with respect to the variable $\bq$, and
\begin{align*}
\alpha=\frac{1-I_{\bq}I}{2},\ \beta=\frac{1+I_{\bq}I}{2}.
\end{align*}
Further, if $f\in\mathcal{S}(\Omega_D)$ as well, we have
\begin{align*}
G_{\bq}T_{\Omega_I}f(\bq)=f(\bq).
\end{align*}
\end{theorem}
The method we applied below is to rewrite $\bq$ in terms of $\bq_I$ and $\bq_{-I}$ by the representation formula, which gives rise to two singular integral operators, and it makes sense only as Cauchy's principal values.
\begin{proof}
Firstly, we denote $\bx=x_0+I|\ubx|=:x_0+I\eta$, $\overline{\partial}_{\bq_I}=\frac{1}{2}(\partial_{q_0}+I\partial_{\zeta})$ and ${\partial}_{\bq_I}=\frac{1}{2}(\partial_{q_0}-I\partial_{\zeta})$. We notice that
\begin{align*}
S^{-1}(\bq,\bx)
=\alpha (\bx-\bq_I)^{-1}+\beta (\bx-\bq_{-I})^{-1},
\end{align*}
where $(\bx-\bq_I)^{-1}$ is the Cauchy kernel on the plane $\mathbb{C}_I$, and
\begin{align*}
(\bx-\bq_I)^{-1}=\frac{\overline{\bx-\bq_I}}{|\bx-\bq_I|^2}=-\partial_{\bq_I}\ln|\bx-\bq_I|=\partial_{\bx_I}\ln|\bx-\bq_I|.
\end{align*}
Since $T_{\Omega_I}f$ is a singular integral, which only makes sense as a Cauchy principal value, let $B_{\epsilon}=B(\bq_I,\epsilon)\cup B(\bq_{-I},\epsilon)\subset\Omega_I$ for a sufficiently small $\epsilon>0$. Then, we have
\begin{align*}
&-2\pi T_{\Omega_I}f(\bq)=\int_{\Omega_I}S^{-1}(\bq,\bx)f(\bx)dV_I(\bx)\\
=&\lim_{\epsilon\rightarrow 0}\int_{\Omega_I\backslash B_{\epsilon}}S^{-1}(\bq,\bx)f(\bx)dV_I(\bx)\\
=&\lim_{\epsilon\rightarrow 0}\int_{\Omega_I\backslash B_{\epsilon}}\big(\alpha (\bx-\bq_I)^{-1}+\beta (\bx-\bq_{-I})^{-1}\big)f(\bx)dV_I(\bx)\\
=&\lim_{\epsilon\rightarrow 0}\int_{\Omega_I\backslash B_{\epsilon}}\bigg[\big(\alpha \ln|\bx-\bq_I|+\beta \ln|\bx-\bq_{-I}|\big)\partial_{\bx_I}\bigg]f(\bx)dV_I(\bx)\\
=&\lim_{\epsilon\rightarrow 0}\bigg[-\int_{\Omega_I\backslash B_{\epsilon}}\big(\alpha \ln|\bx-\bq_{I}|+\beta \ln|\bx-\bq_{-I}|\big)\big(\partial_{\bx_I}f(\bx)\big)dV_I(\bx)\\
&+\bigg(\int_{\partial\Omega_I}-\int_{\partial B_{\epsilon}}\bigg)\big(\alpha \ln|\bx-\bq_{I}|+\beta \ln|\bx-\bq_{-I}|\big)\overline{d\bx^*}f(\bx)\bigg]\\
=&-\int_{\Omega_I}\big(\alpha \ln|\bx-\bq_{I}|+\beta \ln|\bx-\bq_{-I}|\big)\big(\partial_{\bx_I}f(\bx)\big)dV_I(\bx)\\
&+\int_{\partial\Omega_I}\big(\alpha \ln|\bx-\bq_{I}|+\beta \ln|\bx-\bq_{-I}|\big)\overline{d\bx^*}f(\bx).
\end{align*}
It is easy to prove that we can interchange differentiation and the integration above with Theorem \ref{inter}. Indeed, since $f\in C^1(\overline{\Omega_D})$, which implies that $f$ is bounded. Further, the homogeneity of
\begin{align*}
\partial_{q_0}\big(\alpha \ln|\bx-\bq_{I}|+\beta \ln|\bx-\bq_{-I}|\big)=\alpha\frac{q_0-x_0}{|\bq_I-\bx|^2}+\beta\frac{q_0-x_0}{|\bq_{-I}-\bx|^2}
\end{align*}
suggests that it is integrable with respect to $\bx$, which means that the two conditions of Theorem \ref{inter} are satisfied. Hence, we have
\begin{align}\label{TOI}
2\pi \partial_{q_0}T_{\Omega_I}f(\bq)=&\int_{\Omega_I}\bigg(\alpha\frac{q_0-x_0}{|\bq_I-\bx|^2}+\beta\frac{q_0-x_0}{|\bq_{-I}-\bx|^2}\bigg)\big(\partial_{\bx_I}f(\bx)\big)dV_I(\bx)\nonumber\\
&-\int_{\partial\Omega_I}\bigg(\alpha\frac{q_0-x_0}{|\bq_I-\bx|^2}+\beta\frac{q_0-x_0}{|\bq_{-I}-\bx|^2}\bigg)\overline{d\bx^*}f(\bx).
\end{align}
Further, with the help of Theorem \ref{Gauss}, we know that
\begin{align*}
&\bigg(\int_{\partial\Omega_I}-\int_{\partial B_{\epsilon}}\bigg)\bigg(\alpha\frac{q_0-x_0}{|\bq_I-\bx|^2}+\beta\frac{q_0-x_0}{|\bq_{-I}-\bx|^2}\bigg)\overline{d\bx^*}f(\bx)\\
=&\int_{\Omega_I\backslash B_{\epsilon}}\bigg[\bigg(\alpha\frac{q_0-x_0}{|\bq_I-\bx|^2}+\beta\frac{q_0-x_0}{|\bq_{-I}-\bx|^2}\bigg)\partial_{\bx_I}\bigg]f(\bx)\\
&\quad\quad\quad+\bigg(\alpha\frac{q_0-x_0}{|\bq_I-\bx|^2}+\beta\frac{q_0-x_0}{|\bq_{-I}-\bx|^2}\bigg)\big(\partial_{\bx_I}f(\bx)\big)dV_I(\bx).
\end{align*}
Substituting the equation above into \eqref{TOI}, we have
\begin{align}\label{Homo}
2\pi \partial_{q_0}T_{\Omega_I}f(\bq)=&\lim_{\epsilon\rightarrow 0}\int_{B_{\epsilon}}\bigg(\alpha\frac{q_0-x_0}{|\bq_I-\bx|^2}+\beta\frac{q_0-x_0}{|\bq_{-I}-\bx|^2}\bigg)\big(\partial_{\bx_I}f(\bx)\big)dV_I(\bx)\nonumber\\
&-\int_{\Omega_I\backslash B_{\epsilon}}\bigg[\bigg(\alpha\frac{q_0-x_0}{|\bq_I-\bx|^2}+\beta\frac{q_0-x_0}{|\bq_{-I}-\bx|^2}\bigg)\partial_{\bx_I}\bigg]f(\bx)dV_I(\bx)
\nonumber\\&-\int_{\partial B_{\epsilon}}\bigg(\alpha\frac{q_0-x_0}{|\bq_I-\bx|^2}+\beta\frac{q_0-x_0}{|\bq_{-I}-\bx|^2}\bigg)\overline{d\bx^*}f(\bx).
\end{align}
From the homogeneity of $\displaystyle\frac{q_0-x_0}{|\bq_I-\bx|^2}$ and $\displaystyle\frac{q_0-x_0}{|\bq_{-I}-\bx|^2}$, one can easily show that
\begin{align*}
\lim_{\epsilon\rightarrow 0}\int_{B_{\epsilon}}\bigg(\alpha\frac{q_0-x_0}{|\bq_I-\bx|^2}+\beta\frac{q_0-x_0}{|\bq_{-I}-\bx|^2}\bigg)\big(\partial_{\bx_I}f(\bx)\big)dV_I(\bx)=0.
\end{align*}
Indeed, we can see that
\begin{align*}
\lim_{\epsilon\rightarrow 0}\int_{B_{\epsilon}}\frac{q_0-x_0}{|\bq_I-\bx|^2}\big(\partial_{\bx_I}f(\bx)\big)dV_I(\bx)=\lim_{\epsilon\rightarrow 0}\int_{B(\bq_I,{\epsilon})}\frac{q_0-x_0}{|\bq_I-\bx|^2}\big(\partial_{\bx_I}f(\bx)\big)dV_I(\bx).
\end{align*}
By changing to the spherical coordinates $\bx=\bq_I+r e^{I\theta}$ with $dV_I(\bx)=r d rd\theta$ and $f\in C^1(\overline{\Omega_D})$, which implies that $|\partial_{\bx_I}f(\bx)|$ is bounded, we can immediately see that
$$\lim_{\epsilon\rightarrow 0}\int_{B_{\epsilon}}\frac{q_0-x_0}{|\bq_I-\bx|^2}\big(\partial_{\bx_I}f(\bx)\big)dV_I(\bx)=0.$$
Further, with a similar argument as in \cite[Theorem 8.2]{23}, let $\bx=\bq_I+\epsilon\bo{t}$, with $|\bo{t}|=1$ and $\bo{t}\in\C_I$. We can rewrite
\begin{align*}
&\lim_{\epsilon\rightarrow 0}\int_{\partial B_{\epsilon}}\frac{q_0-x_0}{|\bq_I-\bx|^2}\overline{d\bx^*} f(\bx)
=\lim_{\epsilon\rightarrow 0}\int_{\partial B(\bq_I,{\epsilon})}\frac{q_0-x_0}{|\bq_I-\bx|^2}\overline{d\bx^*} f(\bx)\\
=&\lim_{\epsilon\rightarrow 0}\int_{\partial B(\bq_I,{\epsilon})}\frac{q_0-x_0}{|\bq_I-\bx|^2}\overline{d\bx^*} f(\bq_I)
+\lim_{\epsilon\rightarrow 0}\int_{\partial B(\bq_I,{\epsilon})}\frac{q_0-x_0}{|\bq_I-\bx|^2}\overline{d\bx^*} (f(\bx)-f(\bq_I)).
\end{align*}
By the continuity of $f$ at $\bq_I$, we can easily see that the second integral above is zero. Now, we consider
\begin{align*}
\lim_{\epsilon\rightarrow 0}\int_{\partial B(\bq_I,{\epsilon})}\frac{q_0-x_0}{|\bq_I-\bx|^2}\overline{d\bx^*} f(\bq_I)=\int_{|\bo{t}|=1}-t_0\overline{d\bo{t}^*}f(\bq_I),
\end{align*}
and using Theorem \ref{Gauss}, the integral above becomes
\begin{align*}
\int_{|\bo{t}|<1}-(\partial_{\bo{t}_I}t_0)dV_I(\bo{t})f(\bq_I)=-\int_{|\bo{t}|<1}dV_I(\bo{t})f(\bq_I)=-\pi f(\bq_I).
\end{align*}
Similarly, we can also obtain
\begin{align*}
\lim_{\epsilon\rightarrow 0}\int_{\partial B_{\epsilon}}\frac{q_0-x_0}{|\bq_{-I}-\bx|^2}\overline{d\bx^*} f(\bx)=-\pi  f(\bq_{-I}).
\end{align*}
 Hence, we have that
\begin{align*}
\lim_{\epsilon\rightarrow 0}\int_{\partial B_{\epsilon}}\bigg(\alpha\frac{q_0-x_0}{|\bq_I-\bx|^2}+\beta\frac{q_0-x_0}{|\bq_{-I}-\bx|^2}\bigg)\overline{d\bx^*} f(\bx)=-\pi [\alpha f(\bq_I)+\beta f(\bq_{-I})].
\end{align*}
Further, we notice that
\begin{align*}
\frac{q_0-x_0}{|\bq_{\pm I}-\bx|^2}\partial_{\bx_I}=\frac{\partial}{\partial q_0}\ln|\bq_{\pm I}-\bx|\partial_{\bx_I}=\frac{\partial}{\partial q_0}\partial_{\bx_I}\ln|\bq_{\pm I}-\bx|=\frac{\partial}{\partial q_0}\frac{1}{\bx-\bq_{\pm I}},
\end{align*}
which gives us that
\begin{align}\label{pq0}
&2\pi \partial_{q_0}T_{\Omega_I}f(\bq)\nonumber\\=&\int_{\Omega_I}\bigg[\frac{\partial}{\partial q_0}\bigg(\alpha\frac{1}{\bx-\bq_{ I}}+\beta\frac{1}{\bx-\bq_{-I}}\bigg)\bigg]f(\bx)dV_I(\bx)+\pi[\alpha f(\bq_I)+\beta f(\bq_{-I})]\nonumber\\
=&\int_{\Omega_I}\frac{\partial}{\partial q_0}S^{-1}(\bq,\bx)f(\bx)dV_I(\bx)+\pi [\alpha f(\bq_I)+\beta f(\bq_{-I})].
\end{align}
Next, we consider the derivatives of $\partial_{q_i}T_{\Omega_I}f(\bq),\ i=1,\ldots,m$. Firstly, we notice that
\begin{align*}
\partial_{q_i}=\frac{\partial\zeta}{\partial q_i}\frac{\partial}{\partial\zeta}=\frac{q_i}{|\ubq|}\frac{\partial}{\partial\zeta}.
\end{align*}
Then, with a similar argument as applied to $\partial_{q_0}T_{\Omega_I}f(\bq)$, we have
\begin{align}\label{PQI}
&2\pi \partial_{q_i}T_{\Omega_I}f(\bq)\nonumber\\
=&\frac{q_i}{|\ubq|}\frac{\partial}{\partial\zeta}\bigg[\int_{\Omega_I}\big(\alpha \ln|\bx-\bq_{I}|+\beta \ln|\bx-\bq_{-I}|\big)\big(\partial_{\bx_I}f(\bx)\big)dV_I(\bx)\nonumber\\
&-\int_{\partial\Omega_I}\big(\alpha \ln|\bx-\bq_{I}|+\beta \ln|\bx-\bq_{-I}|\big)\overline{d\bx^*}f(\bx)\bigg]\nonumber\\
=&\frac{q_i}{|\ubq|}\bigg[\int_{\Omega_I}\bigg(\alpha\frac{\zeta-\eta}{|\bq_I-\bx|^2}+\beta\frac{\zeta+\eta}{|\bq_{-I}-\bx|^2}\bigg)\big(\partial_{\bx_I}f(\bx)\big)dV_I(\bx)\nonumber\\
&-\int_{\partial \Omega_I}\bigg(\alpha\frac{\zeta-\eta}{|\bq_I-\bx|^2}+\beta\frac{\zeta+\eta}{|\bq_{-I}-\bx|^2}\bigg)\overline{d\bx^*}f(\bx)\bigg].
\end{align}
Further, Theorem \ref{Gauss} tells us that
\begin{align*}
&\bigg(\int_{\partial \Omega_I}-\int_{\partial B_{\epsilon}}\bigg)\bigg(\alpha\frac{\zeta-\eta}{|\bq_I-\bx|^2}+\beta\frac{\zeta+\eta}{|\bq_{-I}-\bx|^2}\bigg)\overline{d\bx^*}f(\bx)\\
=&\int_{\Omega_I\backslash B_{\epsilon}}\bigg[\bigg(\alpha\frac{\zeta-\eta}{|\bq_I-\bx|^2}+\beta\frac{\zeta+\eta}{|\bq_{-I}-\bx|^2}\bigg)\partial_{\bx_I}\bigg]f(\bx)\\
&+\bigg(\alpha\frac{\zeta-\eta}{|\bq_I-\bx|^2}+\beta\frac{\zeta+\eta}{|\bq_{-I}-\bx|^2}\bigg)\big(\partial_{\bx_I}f(\bx)\big)dV_I(\bx).
\end{align*}
Substituting the equation above into \eqref{PQI} to obtain
\begin{align*}
2\pi 	\partial_{q_i}T_{\Omega_I}f(\bq)=&\frac{q_i}{|\ubq|}\bigg(-\int_{\partial B_{\epsilon}}\bigg(\alpha\frac{\zeta-\eta}{|\bq_I-\bx|^2}+\beta\frac{\zeta+\eta}{|\bq_{-I}-\bx|^2}\bigg)\overline{d\bx^*}f(\bx)\\
	&+\int_{B_{\epsilon}}\bigg(\alpha\frac{\zeta-\eta}{|\bq_I-\bx|^2}+\beta\frac{\zeta+\eta}{|\bq_{-I}-\bx|^2}\bigg)\big(\partial_{\bx_I}f(\bx)\big)dV_I(\bx)\\
	&-\int_{\Omega_I\backslash B_{\epsilon}}\bigg[\bigg(\alpha\frac{\zeta-\eta}{|\bq_I-\bx|^2}+\beta\frac{\zeta+\eta}{|\bq_{-I}-\bx|^2}\bigg)\partial_{\bx_I}\bigg]f(\bx)dV_I(\bx)\bigg).
\end{align*}
We notice that
\begin{align*}
\frac{\zeta\pm\eta}{|\bq_{\pm I}-\bx|^2}=\ln|\bq_{\pm I}-\bx|\frac{\partial}{\partial{\zeta}},
\end{align*}
which leads to
\begin{align*}
\frac{\zeta\pm\eta}{|\bq_{\pm I}-\bx|^2}\partial_{\bx_I}=\ln|\bq_{\pm I}-\bx|\frac{\partial}{\partial{\zeta}}\partial_{\bx_I}=\ln|\bq_{\pm I}-\bx|\partial_{\bx_I}\frac{\partial}{\partial{\zeta}}=\frac{1}{\bx-\bq_{\pm I}}\frac{\partial}{\partial{\zeta}}.
\end{align*}
Hence, we have that
\begin{align*}
&2\pi \partial_{q_i}T_{\Omega_I}f(\bq)=\lim_{\epsilon\rightarrow 0}\frac{q_i}{|\ubq|}\bigg[\int_{\Omega_I}\frac{\partial}{\partial{\zeta}}\bigg(\alpha\frac{1}{\bx-\bq_{- I}}+\beta\frac{1}{\bx-\bq_{ I}}\bigg)f(\bx)dV_I(\bx)\\
&-\int_{\partial B_{\epsilon}}\bigg(\alpha\frac{\zeta-\eta}{|\bq_I-\bx|^2}+\beta\frac{\zeta+\eta}{|\bq_{-I}-\bx|^2}\bigg)\overline{d\bx^*}f(\bx)\bigg]\\
=&\frac{q_i}{|\ubq|}\bigg(\int_{\Omega_I}\frac{\partial}{\partial{\zeta}}S^{-1}(\bq,\bx)f(\bx)dV_I(\bx)-\pi(-\alpha \overline{I}f(\bq_I)+\beta \overline{I}f(\bq_{-I}))\bigg)\\
=&\frac{q_i}{|\ubq|}\bigg(\int_{\Omega_I}\frac{\partial}{\partial{\zeta}}S^{-1}(\bq,\bx)f(\bx)dV_I(\bx)-\pi(\alpha I f(\bq_I)-\beta {I}f(\bq_{-I}))\bigg)
\end{align*}
Therefore, we obtain
\begin{align}\label{pqi}
&2\pi\frac{\ubq}{|\ubq|^2}\sum_{i=1}^mq_i\partial_{q_i}T_{\Omega_I}f(\bq)\nonumber\\
=&\frac{\ubq}{|\ubq|^2}\sum_{i=1}^mq_i\frac{q_i}{|\ubq|}\bigg(\int_{\Omega_I}\frac{\partial}{\partial{\zeta}}S^{-1}(\bq,\bx)f(\bx)dV_I(\bx)-\pi(\alpha I f(\bq_I)-\beta {I}f(\bq_{-I}))\bigg)\nonumber\\
=&\int_{\Omega_I}I_{\bq}\frac{\partial}{\partial{\zeta}}S^{-1}(\bq,\bx)f(\bx)dV_I(\bx)-\pi I_{\bq}(\alpha I f(\bq_I)-\beta {I}f(\bq_{-I}))\nonumber\\
=&\int_{\Omega_I}I_{\bq}\frac{\partial}{\partial{\zeta}}S^{-1}(\bq,\bx)f(\bx)dV_I(\bx)+\pi (\alpha  f(\bq_I)+\beta f(\bq_{-I})).
\end{align}
Combining \eqref{pq0} with \eqref{pqi}, we have that
\begin{align*}
&2\pi G_{\bq}T_{\Omega_I}f(\bq)=2\pi \partial_{q_0}T_{\Omega_I}f(\bq)+2\pi \frac{\ubq}{|\ubq|^2}\sum_{i=1}^mq_i\partial_{q_i}T_{\Omega_I}f(\bq)\\
=&\int_{\Omega_I}\bigg(\frac{\partial}{\partial q_0}+I_{\bq}\frac{\partial}{\partial{\zeta}}\bigg)S^{-1}(\bq,\bx)f(\bx)dV_I(\bx)+2\pi (\alpha  f(\bq_I)+\beta f(\bq_{-I}))\\
=&\int_{\Omega_I}G_{\bq}S^{-1}(\bq,\bx)f(\bx)dV_I(\bx)+2\pi (\alpha  f(\bq_I)+\beta f(\bq_{-I}))\\
=&2\pi (\alpha  f(\bq_I)+\beta f(\bq_{-I})).
\end{align*}
Therefore, we obtain
\begin{align*}
G_{\bq}T_{\Omega_I}f(\bq)&=\alpha f(\bq_I)+\beta f(\bq_{-I}).
\end{align*}
Further, if $f\in\mathcal{S}(\Omega_D)$, with Theorem \ref{Rep}, we immediately have $G_{\bq}T_{\Omega_I}f(\bq)=f(\bq)$, which completes the proof.
\end{proof}
Further, we need the following technical lemma to interchange differentiation and integration as follows.
\begin{lemma}\label{Lemma1}
Let $\Omega_D\subset \R_*^{m+1}$ be a  bounded and axially symmetric domain, $f\in C^1(\overline{\Omega_D})\cap\mathcal{S}(\Omega_D)$, then we have
\begin{align*}
\partial_{q_i}T_{\Omega_D}f(\bq)=\frac{2}{\omega_{m-1}}\int_{\mathbb{S}^+}\partial_{q_i}T_{\Omega_I}f(\bq)dS(I).
\end{align*}
\end{lemma}
\begin{proof}
Let $\bx=x_0+\ubx\in\Omega_D$, we rewrite $\ubx=rI$ with $I\in\mathbb{S}$ by spherical coordinates. Then, we have the volume element $dV(\bx)=dx_0dV(\ubx)=r^{m-1}dx_0drdS(I)=r^{m-1}dV_I(\bx)dS(I)$, where $dS(I)$ is the surface element on the half sphere $\mathbb{S}^+$. Hence, we have
\begin{align*}
T_{\Omega_D}f(\bq)=&-\frac{1}{2\pi}\int_{\Omega_D}K(\bq,\bx)f(\bx)dV(\bx)\\
=&\frac{-1}{\pi\omega_{m-1}}\int_{\mathbb{S}^+}\int_{\Omega_I}S^{-1}(\bq,\bx)f(\bx)dV_I(\bx)dS(I)\\
=&\frac{2}{\omega_{m-1}}\int_{\mathbb{S}^+}T_{\Omega_I}f(\bq)dS(I).
\end{align*}
Now, we verify the two conditions in Theorem \ref{inter}. Firstly, the existence of $\partial_{q_i}T_{\Omega_I}f(\bq)$ has already been justified in the theorem above. Secondly, the equations \eqref{TOI} and \eqref{PQI} give us that
\begin{align*}
2\pi \partial_{q_0}T_{\Omega_I}f(\bq)=&\int_{\Omega_I}\bigg(\alpha\frac{q_0-x_0}{|\bq_I-\bx|^2}+\beta\frac{q_0-x_0}{|\bq_{-I}-\bx|^2}\bigg)\big(\partial_{\bx_I}f(\bx)\big)dV_I(\bx)\nonumber\\
&-\int_{\partial\Omega_I}\bigg(\alpha\frac{q_0-x_0}{|\bq_I-\bx|^2}+\beta\frac{q_0-x_0}{|\bq_{-I}-\bx|^2}\bigg)\overline{d\bx^*}f(\bx),\\
2\pi\partial_{q_i}T_{\Omega_I}f(\bq)=&\frac{q_i}{|\ubq|}\bigg[\int_{\Omega_I}\bigg(\alpha\frac{\zeta-\eta}{|\bq_I-\bx|^2}+\beta\frac{\zeta+\eta}{|\bq_{-I}-\bx|^2}\bigg)\big(\partial_{\bx_I}f(\bx)\big)dV_I(\bx)\nonumber\\
&-\int_{\partial \Omega_I}\bigg(\alpha\frac{\zeta-\eta}{|\bq_I-\bx|^2}+\beta\frac{\zeta+\eta}{|\bq_{-I}-\bx|^2}\bigg)\overline{d\bx^*}f(\bx)\bigg].
\end{align*}
One can observe that the integral over $\partial\Omega_I$ is bounded with $f\in C^1(\overline{\Omega_D})$, since the integrand has no singularity on the boundary. Further, from the homogenecity of $\displaystyle\frac{q_0-x_0}{|\bq_{\pm I}-\bx|^2},\ \displaystyle\frac{\zeta\pm\eta}{|\bq_{\pm I}-\bx|^2}$, $f\in C^1(\overline{\Omega_D})$ and a similar argument for \eqref{Homo}, one can easily see that $\partial_{q_j}T_{\Omega_I}f(\bq),\ j=0,\ldots,m$ is integrable over $\mathbb{S}^+$ for all $\bq\in\Omega_D$. Indeed, for $j=0$, we can see that
\begin{align*}
\bv\partial_{q_0}T_{\Omega_I}f(\bq)\bv
\leq& C_1\int_{\Omega_I}\frac{|q_0-x_0|}{|\bq_I-\bx|^2}|\partial_{\bx_I}f(\bx)|dV_I(\bx)\\
&+C_2\int_{\Omega_I}\frac{|q_0-x_0|}{|\bq_{-I}-\bx|^2}|\partial_{\bx_I}f(\bx)|dV_I(\bx)+C'.
\end{align*}
Since the integrals above make sense as Cauchy's principle values, we need to consider, for instance, the limit of the following integral
\begin{align*}
\int_{\Omega_I\backslash B(\bq_I,\epsilon)}\frac{|q_0-x_0|}{|\bq_I-\bx|^2}|\partial_{\bx_I}f(\bx)|dV_I(\bx).
\end{align*}
We notice that $dV_I(\bx)$ includes $|\bx-\bq_I|$ if we apply spherical coordinates. Further, since $f\in C^1(\Omega_D)$ and $\Omega_D$ is bounded and closed, we can choose $R>0$ such that $\Omega_D\subset B(\bq_I,R)$ and we assume that $f(\bx)=0$ when $\bx\in B(\bq_I,R)\backslash\Omega_D$. Then, we set $\bx=\bq_I+s\bo{t}$, where $\bo{t}=t_0+It_1\in\Omega_I$ and we have
\begin{align*}
 &\int_{\Su^+}\int_{\Omega_I\backslash B(\bq_I,\epsilon)}\frac{|q_0-x_0|}{|\bq_I-\bx|^2}|\partial_{\bx_I}f(\bx)|dV_I(\bx)dS(I)\\
&\leq C\int_{\epsilon}^R\int_{|\bo{t}|=1}|t_0|dsdS(\bo{t})dS(I)\leq C,
\end{align*}
where the first inequality above because of $f\in C^1(\overline{\Omega_D})$, and this justifies the second condition in Theorem \ref{inter}, which completes the proof for $j=0$, and a similar argument can be applied for $j=1,2,\ldots,m$.
\end{proof}
Now, we claim that $T_{\Omega_D}$ is a right inverse of $G_{\bq}$ as follows.
\begin{theorem}\label{leftinv}
Let $\Omega_D\subset \R_*^{m+1}$ be a bounded axially symmetric domain and $f\in C^1(\overline{\Omega_D})\cap \mathcal{S}(\Omega_D)$, then
\begin{align*}
G_{\bq}T_{\Omega_D}f(\bq)=f(\bq).
\end{align*}
\end{theorem}
\begin{proof}
Let $\bx=x_0+\ubx\in\Omega_D$, we rewrite $\ubx=rI$ with $I\in\mathbb{S}$ by spherical coordinates. Then, we have the volume element $dV(\bx)=dx_0dV(\ubx)=r^{m-1}dx_0drdS(I)=r^{m-1}dV_I(\bx)dS(I)$, where $dS(I)$ is the surface element on the sphere $\mathbb{S}$. Hence, we have
\begin{align*}
T_{\Omega_D}f(\bq)=&-\frac{1}{2\pi}\int_{\Omega_D}K(\bq,\bx)f(\bx)dV(\bx)\\
=&\frac{-1}{\pi\omega_{m-1}}\int_{\mathbb{S}^+}\int_{\Omega_I}S^{-1}(\bq,\bx)f(\bx)dV_I(\bx)dS(I)\\
=&\frac{2}{\omega_{m-1}}\int_{\mathbb{S}^+}T_{\Omega_I}f(\bq)dS(I).
\end{align*}
With Lemma \ref{Lemma1} and Theorem \ref{thmOI}, we have
\begin{align*}
G_{\bq}T_{\Omega_D}f(\bq)=\frac{2}{\omega_{m-1}}\int_{\mathbb{S}^+}G_{\bq}T_{\Omega_I}f(\bq)dS(I)
=\frac{2}{\omega_{m-1}}\int_{\mathbb{S}^+}f(\bq)dS(I)=f(\bq),
\end{align*}
which completes the proof.
\end{proof}
With the proof of Lemma \ref{Lemma1}, for $f\in C^1(\overline{\Omega_D})\cap \mathcal{S}(\Omega_D)$, we can have that
\begin{align*}
&\partial_{q_0}T_{\Omega_D}f(\bq)\\
=&\frac{1}{2\pi}\int_{\mathbb{S}^+}\bigg[\int_{\Omega_I}\frac{\partial S^{-1}(\bq,\bx)}{\partial q_0}f(\bx)dV_I(\bx)+\pi f(\bq)\bigg]dS(I),\\
&\partial_{q_i}T_{\Omega_D}f(\bq)\\
=&\frac{q_i}{2\pi|\ubq|}\int_{\mathbb{S}^+}\bigg[\int_{\Omega_I}\frac{\partial S^{-1}(\bq,\bx)}{\partial{\zeta}}f(\bx)dV_I(\bx)-\pi [\alpha I f(\bq_I)-\beta {I}f(\bq_{-I})]\bigg]dS(I)\\
=&-\frac{q_i\ubq}{2\pi|\ubq|^2}I_{\bq}\int_{\mathbb{S}^+}\bigg[\int_{\Omega_I}\frac{\partial S^{-1}(\bq,\bx)}{\partial{\zeta}}f(\bx)dV_I(\bx)-\pi [\alpha I f(\bq_I)-\beta {I}f(\bq_{-I})]\bigg]dS(I)\\
=&-\frac{q_i\ubq}{2\pi|\ubq|^2}\int_{\mathbb{S}^+}\bigg[I_{\bq}\int_{\Omega_I}\frac{\partial S^{-1}(\bq,\bx)}{\partial{\zeta}}f(\bx)dV_I(\bx)+\pi f(\bq)\bigg]dS(I),
\end{align*}
where the last equation comes from the fact that
$I_{\bq}\alpha I=-\alpha,\ I_{\bq}\beta I=\beta,$
 $f\in\mathcal{S}(\Omega_D)$ and the representation formula in Theorem \ref{Rep}.
 \par
 Now, we define the $L^p$ space over an axially symmetric domain $\Omega_D$ for slice functions as $$\mathcal{L}^p(\Omega_D)=\mathcal{S}(\Omega_D)\cap L^p(\Omega_D).$$
 Proposition \ref{BoundT} immediately gives us the following mapping property for $T_{\Omega_D}$.
\begin{proposition}\label{p38}
Let $\Omega_D\subset\R^{m+1}_*$ be a bounded domain and $p>\max\{m,2\}$, then we have that
\begin{align*}
T_{\Omega_D}:\ \CL^p(\Omega_D)\longrightarrow \CL^p(\Omega_D)
\end{align*}
is continuous.
\end{proposition}
\begin{proof}
As we mentioned, $T_{\Omega_D}$ maps $L^p(\Omega_D)$ to $L^p(\Omega_D)$ from Proposition \ref{BoundT}. Here, we only prove $T_{\Omega_D}$ also maps $L^{p}( \Omega_{D}) $ to $S( \Omega_D) $. Indeed, since $S^{-1}(\bq,\bx)$ is a slice function with respect to $\boldsymbol{q}$ (\cite{15}), we assume that $S^{-1}( \boldsymbol{q}, \bx) $ can be rewritten as
\begin{align*}
S^{-1}(\boldsymbol{q},\boldsymbol{ x})=F_{1}(z,\boldsymbol{x})+JF_{2}(z,\boldsymbol{x}),
\end{align*}
where $F(z,\bx)=F_1(z,\boldsymbol{x})+iF_2(z,\boldsymbol{x})$ is the stem function which induces the slice Cauchy kernel $S^{-1}(\bq,\bx)$ as in the definition. Hence, we have
\begin{align*}
&T_{\Omega_D}f(\boldsymbol{q})=\int_{\Omega_D}K(\boldsymbol{q},\boldsymbol{x})f(\boldsymbol{x})dV(\boldsymbol{x})=\int_{\Omega_D}\frac{2S^{-1}(\boldsymbol{q},\boldsymbol{x})}{\omega_{m-1}|\underline{\boldsymbol{x}}|^{m-1}}f(\boldsymbol{x})dV(\boldsymbol{x})\\
=&\frac{2}{\omega_{m-1}}\left[\int_{\Omega_D}\frac{F_1(z,\boldsymbol{x})}{|\underline{x}|^{m-1}}f(\boldsymbol{x})dV(\boldsymbol{x})+J\int_{\Omega_D}\frac{F_2(z,\boldsymbol{x})}{|\underline{\boldsymbol{x}}|^{m-1}}f(\boldsymbol{x})dV(\boldsymbol{x})\right].
\end{align*}
If we let
\begin{align*}
&H(z)=H_{1}(z)+iH_{2}(z)\\
=&\left[\frac{1}{2\omega_{m-1}}\int_{\Omega_{D}}\frac{F_{1}(z,\boldsymbol{x})}{|\boldsymbol{x}|^{m-1}}f(\boldsymbol{x})dV(\boldsymbol{x})\right]+i\left[\frac{1}{2\omega_{m-1}}\int_{\Omega_{D}}\frac{F_{2}(z,\boldsymbol{x})}{|\boldsymbol{x}|^{m-1}}f(\boldsymbol{x})dV(\boldsymbol{x})\right],
\end{align*}
then since $F_1(z,\boldsymbol{x}),F_2(z,\boldsymbol{x}),f(\boldsymbol{x})$ are all real Clifford-valued, we know that $H_1(z)$ and $H_2(z)$ are both real Clifford-valued. Further, since $F_1(\bar{z},\boldsymbol{x})=F_1(z,\boldsymbol{x})$ and $F_2(\overline{z},\boldsymbol{x})=-F_2(z,\boldsymbol{x})$, we immediately have $H_1( \overline {z}) = H_1( z) $ and $H_2( \overline {z}) = $ $-H_2(z)$, which tells us that $H(z)$ is a stem function. Therefore, the function $T_{\Omega_D}f$ induced by $H( z) $ is a slice function, which completes the proof.
\end{proof}
\section{Hodge decomposition of the Banach space of slice $L^p$ functions}
Let $\Omega_D\subset\R^{m+1}_*$ be a bounded axially symmetric domain and $1\leq p<\infty$, the norm of a Clifford-valued function $f\in L^p(\Omega_D) $ is given by
\begin{align*}
||f||_{L^p(\Omega_D)}:=\bigg(\int_{\Omega_D}|f(\bx)|^pdV(\bx)\bigg)^{\frac{1}{p}}.
\end{align*}
Notice that the set $\mathcal{A}^p(\Omega_D):=\ker G\cap \mathcal{L}^p(\Omega_D)$, equipped with the norm inherited from $L^p(\Omega_D)$, is called the slice monogenic Bergman space. To prove that $\mathcal{L}^p(\Omega_D)$ and $\mathcal{A}^p(\Omega_D)$ are closed subspaces of $L^p(\Omega_D)$, we introduce a proposition as follows.
\begin{proposition}\cite[Proposition 2]{Berg}\label{P2}
Let $\Omega_D\subset\R^{m+1}_*$ be a bounded axially symmetric domain. For any compact set $K\subset\Omega_D$, there exists a constant $\lambda_K>0$ such that
\begin{align*}
\text{sup}\{|f(\bq)|:\ \bq\in K\}\leq \lambda_K||f||_{L^p},\ \forall f\in\mathcal{A}^p(\Omega_D).
\end{align*}
\end{proposition}
\begin{remark}
Proposition $2$ in \cite{Berg} is stated in the quaternionic case with $p=2$, but one can see that it can be generalized to the higher dimensions in the context of Clifford algebras with $1\leq p<\infty$. Further, Proposition $2$ in \cite{Berg} requires $\Omega$ to be a slice domain, this is because slice regular functions there are defined on slice domains. In our case, slice monogenic functions are induced by holomorphic stem functions or defined as slice functions which can also be annihilated by the differential operator $G$, and in this context, we does not require the domain $\Omega_D$ to be a slice domain.
\end{remark}
\begin{proposition}
 Let $\Omega_D\subset\R^{m+1}_*$ be a bounded axially symmetric domain and $1\leq p<\infty$, then
 \begin{enumerate}
 \item$\mathcal{L}^p(\Omega_D)$ is a closed subspace of $L^p(\Omega_D)$,
 \item$ \mathcal{A}^p(\Omega_D)$ is a closed subspace of $\mathcal{L}^p(\Omega_D)$.
 \end{enumerate}
 \end{proposition}
 \begin{proof}
 \begin{enumerate}
\item Suppose that we have a sequence $f_n\in\mathcal{L}^p(\Omega_D)$, which converges to $f\in L^p( \Omega_D) $ in the $L^p$ norm. To show that $f\in \mathcal{S}(\Omega_D)$, we only need to show that $f$ satisfies the representation formula given in Theorem $2.3$. For any $I\in \mathbb{S} , $we have

\begin{align}\label{eqn41}
&||f(\boldsymbol{x})-\alpha f(\boldsymbol{x}_{I})-\beta f(\boldsymbol{x}_{-I})||_{L^{p}}\nonumber\\
\leq&||f(\boldsymbol{x})-f_{n}(\boldsymbol{x})||_{L^{p}}+||f_{n}(\boldsymbol{x})-\alpha f_{n}(\boldsymbol{x}_{I})-\beta f_{n}(\boldsymbol{x}_{-I})||_{L^{p}}\nonumber\\
&+||\alpha f_{n}(\boldsymbol{x}_{I})+\beta f_{n}(\boldsymbol{x}_{-I})-\alpha f(\boldsymbol{x}_{I})-\beta f(\boldsymbol{x}_{-I})||_{L^{p}}.
\end{align}
According to the assumption for $\{f_n\}$, the first summand goes to zero when $n$ goes to infinity. The second summand above is equal to zero, since $f_n$ is a slice function for all $n$. Now, we estimate the third summand as follows.
\begin{align*}
||\alpha f_{n}(\boldsymbol{x}_{I})+\beta f_{n}(\boldsymbol{x}_{-I})-\alpha f(\boldsymbol{x}_{I})-\beta f(\boldsymbol{x}_{-I})||_{L^{p}}\\\leq||\alpha(f_{n}(\boldsymbol{x}_{I})-f(\boldsymbol{x}_{I}))||_{L^{p}}+||\beta(f_{n}(\boldsymbol{x}_{-I})-f(\boldsymbol{x}_{-I}))||_{L^{p}}.
\end{align*}
Now, we assume $\bx= u+ I_{\boldsymbol{x}}v, $since the sequence $\{ f_n\} $ converges to $f$ in the $L^{p}$ norm, which can be written as
\begin{align}\label{eqn42}
&||f_{n}-f||_{L^{p}}^{p}=\int_{\Omega_{D}}|f_{n}(\boldsymbol{x})-f(\boldsymbol{x})|^{p}dV(\boldsymbol{x})\nonumber\\
=&\int_{\mathbb{S}^{+}}\int_{\Omega_{I_{x}}}|(f_{n}-f)(u+I_{\boldsymbol{x}}v)|^{p}v^{2}dudvdS(I_{\boldsymbol{x}}).
\end{align}
 By assumption, equation \eqref{eqn42} converges to zero when $n$ goes to infinity. This implies that for almost every $I_{\boldsymbol{x}}\in\mathbb{S}^{+}$, we have
\begin{align*}
\int_{\Omega_{I_{\boldsymbol{x}}}}|(f_{n}-f)(u+I_{\boldsymbol{x}}v)|^{p}v^2dudv\longrightarrow 0,\quad\mathrm{(when~}n\to\infty).
\end{align*}
Hence, we have
\begin{align*}
&||\alpha(f_n(\boldsymbol{x}_I)-f(\boldsymbol{x}_I))||^p_{L^p}=\int_{\Omega_D}|\alpha(f_n(\boldsymbol{x}_I)-f(\boldsymbol{x}_I))|^pdV(\boldsymbol{x}) \\
\leq &C\int_{\mathbb{S}^{+}}\int_{\Omega_{I_{x}}}|f_{n}(u+Iv)-f(u+Iv)|^{p}v^{2}dudvdS(I_{x}) \\
=& C\int_{\mathbb{S}^{+}}\int_{\Omega_{I}}|(f_{n}(u+Iv)-f(u+Iv))|^{p}v^2dudvdS(I_{x})\\=&C'\int_{\Omega_{I_{x}}}|f_{n}(u+Iv)-f(u+Iv)|^{p}v^{2}dudv\longrightarrow 0,\ (n\rightarrow \infty),
\end{align*}
where the last second equality comes from the fact that the domains of the variables $u,v$ on $\Omega_I$ and $\Omega_{I_{\bx}}$ are the same, since $\Omega_I$ can be obtained by rotating $\Omega_{I_{\bx}}$ around the real axis. Similarly, we also have $\|\beta(f_n(x_{-I})-f(x_{-I}))\|_{L^p}\longrightarrow 0$ when $n\to\infty$. Thus, with equation \eqref{eqn41}, we have $\|f(x)-\alpha f(x_I)-\beta f(x_{-I})\|_{L^p}\longrightarrow 0$ when $n\to \infty$,  which completes the proof.

\item Here, we adapt the proof given in \cite[Theorem 2]{Berg}. Suppose $\{f_n\}$ is a convergent sequence in $\mathcal{A}^p(\Omega_D)$ and its limit function in $\mathcal{L}^p(\Omega_D)$ is denoted by $f$. Proposition \ref{P2} tells us that there exists a function $g:\Omega_D\longrightarrow\Clm$ given by
\begin{align*}
g(\bq):=\lim_{n\rightarrow\infty}f_n(\bq), \quad \bq\in\Omega_D,
\end{align*}
and $\{f_n\}$ converges uniformly to $g$ on compact subsets of $\Omega_D$, which implies that $g$ is a slice monogenic function on $\Omega_D$. Now, for any compact subset $K\subset\Omega_D$, we have
\begin{align*}
0\leq &\int_K|(f-g)(\bx)|^pdV(\bx)\\
\leq& \int_K|(f-f_n)(\bx)|^pdV(\bx)+\int_K|(f_n-g)(\bx)|^pdV(\bx)\\
\leq& ||f-f_n||_{L^p}^p+\int_K|(f_n-g)(\bx)|^pdV(\bx),
\end{align*}
which goes to zero when $n\rightarrow \infty$. This shows that $f=g\in\mathcal{A}^p(\Omega_D)$, which completes the proof.
\end{enumerate}
 \end{proof}
\begin{remark}
Proposition \ref{eqn42} tells us that $\mathcal{L}^p(\Omega_D)$ and $ \mathcal{A}^p(\Omega_D)$ are both Banach spaces with $1\leq p<\infty$.
 \end{remark}
Now, we introduce a notation
\begin{align*}
S_{\partial\Omega_D}f(\bq)=p.v.\frac{1}{2\pi}\int_{\partial\Omega_D}K(\bq,\boldsymbol{x})n(\boldsymbol{x})f(\boldsymbol{x})d\sigma(\boldsymbol{x}),\  \bq\in\partial\Omega_D,
\end{align*}
where $p.v.$ stands for the principal value in the above singular integral. To make this article self-contained, we introduce a Plemelj formula given in \cite{11} as follows.
\begin{theorem} [Plemelj integral formula]\cite[Theorem 4.2]{11}\label{Pl} Let $\Omega_D\subset \mathbb{R}_* ^{m+ 1}$ be a bounded axially symmetric domain with smooth boundary $\partial\Omega_D$, and $y(t)\in\Omega_D$ is a smooth path in $\mathbb{R}_*^{m+1}$ and it has non-tangential limit $\bq\in\partial\Omega_D$ as $t\rightarrow 0$. Then, for each H\"older continuous slice function $f:\Omega_D\longrightarrow\Clm$ defined on $\Omega_D$, we have
	\begin{align*}
		\lim_{t\to0}F_{\partial\Omega_D}f(y(t))
		=\begin{cases}
			\dfrac{f(\boldsymbol{q})}{2}+S_{\partial\Omega_D}f(\bq),\:y(t)\in\Omega_D;\\
			-\dfrac{f(\boldsymbol{q})}{2}+S_{\partial\Omega_D}f(\bq),\:y(t)\in\mathbb{R}_*^{m+1}\backslash\overline{\Omega_D}.\end{cases}
	\end{align*}
\end{theorem}
With Theorem \ref{Pl}, we can easily have a result on slice monogenic continuation as follows.
\begin{corollary}\label{ext}
Let $\Omega_D\subset \mathbb{R} _*^{m+ 1}$ be a bounded axially symmetric domain with smooth boundary $\partial\Omega_D$. The relation
\begin{align*}
S_{\partial_{\Omega_D}}g(\bq)=\frac{g(\bq)}{2},\ \text{for\ all}\ \bq\in\partial\Omega_D
\end{align*}
is necessary and sufficient so that $g$ represents the boundary values of a slice monogenic function $S_{\partial_{\Omega_D}}g$ defined in $\Omega_D$. On the other hand, the relation
\begin{align*}
S_{\partial_{\Omega_D}}g(\bq)=-\frac{g(\bq)}{2},\ \text{for\ all}\ \bq\in\partial\Omega_D
\end{align*}
is necessary and sufficient so that $g$ represents the boundary values of a slice monogenic function $S_{\partial_{\Omega_D}}g$ defined in $\R^{m+1}_*\backslash\Omega_D$.
\end{corollary}
\begin{proof}
Let $f$ be the slice monogenic continuation into the domain $\Omega_D$ of the function $g$ given on $\partial\Omega_D$. Then, the Cauchy integral formula gives us that $f(\bq)=F_{\partial\Omega_D}g(\bq)$. Therefore, the non-tangential boundary values of $f$ are $g$. Using the Plemelj formula in Theorem \ref{Pl}, we get
\begin{align*}
g(\bq)=\frac{g(\bq)}{2}+S_{\partial\Omega_D}g(\bq), \ \text{for all}\  \bq\in\partial\Omega_D,
\end{align*}
which leads to
\begin{align*}
S_{\partial_{\Omega_D}}g(\bq)=\frac{g(\bq)}{2},\ \text{for all}\  \bq\in\partial\Omega_D.
\end{align*}
If vice versa, we have
\begin{align*}
S_{\partial_{\Omega_D}}g(\bq)=\frac{g(\bq)}{2},\ \text{for all}\  \bq\in\partial\Omega_D.
\end{align*}
Then, the Plemelj formula tells us that $F_{\partial\Omega_D}g(\bq)$ has the boundary value $g$, therefore, it is the slice monogenic continuation of $g$ into $\Omega_D$. The proof for the exterior domain case can be obtained similarly.
\end{proof}
\begin{remark}
It is worth pointing out that a Plemelj formula is given in \cite[Theorem 2.3]{18} for $\frac{\partial}{\partial\bx^c}$, which is equivalent to the Plemelj formula introduced in Theorem \ref{Pl}, when we consider slice functions defined on $\Omega_D\subset\R^{m+1}_*$.
\end{remark}
 Following the idea of the proof in \cite[Theorem 8.7]{23}, we introduce a Hodge decomposition of $\mathcal{L}^p(\Omega_D)$ as follows.
\begin{theorem}[Hodge decomposition]\label{Hodge}
Let $\Omega_D\subset \mathbb{R}_* ^{m+ 1}$ be a bounded axially symmetric domain and $p>\max\{m,2\}$. Then, the space $\mathcal{L}^p(\Omega_D)$ allows the orthogonal decomposition
\begin{align*}
\mathcal{L}^{p}(\Omega_{D})=\mathcal{A}^{p}(\Omega_{D})\oplus \bigg(|\ux|^{1-m}G \CL^p_0(\Omega_D)\bigg)
\end{align*}
with respect to the Clifford-valued inner product  given by
\begin{align*}
 \langle f,g\rangle:=\int_{\Omega_D}\overline{f(\bx)}g(\bx)dV(\bx),\quad f,g\in {L}^{p}(\Omega_{D}).
 \end{align*}
\end{theorem}
\begin{proof}
Let $Y= \mathcal{L} ^p( \Omega_D) \ominus\mathcal{A}^{p}(\Omega_{D})$ be the orthogonal complement to the space $\mathcal{A}^{p}(\Omega_{D})$ with respect to the inner product  $\langle\cdot,\cdot\rangle$ given above. For any $f\in Y$ with $p>\max\{m,2\}$, we have $|\ux|^{m-1}f\in L^p(\Omega_D)$, so that $g= T_{\Omega_D}(|\ux|^{m-1}f)( \bx) \in L^{p}(\Omega_D)$. Then, we obtain $f(\bx)=\dfrac{Gg(\bx)}{|\ux|^{m-1}}$, and for any $\phi\in \mathcal{A}^{p}(\Omega_{D})$, we have
\begin{align*}
0=\int_{\Omega_{D}}\overline{\phi(\boldsymbol{x})}f(\boldsymbol{x})dV(\boldsymbol{x})=\int_{\Omega_{D}}\overline{\phi(\boldsymbol{x})}\dfrac{Gg(\bx)}{|\ux|^{m-1}}dV(\boldsymbol{x}).
\end{align*}
In particular, let $\phi_l(\bx)=\dfrac2{\omega_{m-1}}\overline{S^{-1}(\bq_l,\bx)}$, where $\{\bq_l\}$ is dense in $\mathbb{R}_*^{m+1}\backslash\overline{\Omega_D}.$ Obviously, we have $\overline{\phi_l(\bx)}|\underline{\bx}|^{1-m}=K(\bq_l,\bx)$, $\phi_l(\bx)\in\mathcal{L}^p(\Omega_D)$ and $\phi_l( \boldsymbol{x}) G_{\boldsymbol{x}}= 0, $ where $G_{\boldsymbol{x}}$ means that $G$ is a differential operator with respect to $\bx$. Then, we have
\begin{align*}
&0=\int_{\Omega_{D}}\overline{\phi_{l}(\boldsymbol{x})}\dfrac{Gg(\bx)}{|\ux|^{m-1}}dV(\boldsymbol{x})=\frac{2}{\omega_{m-1}}\int_{\mathbb{S}^+}\int_{\Omega_{I}}S^{-1}(\boldsymbol{q}_l,\boldsymbol{x})(Gg(\boldsymbol{x}))dV_I(\boldsymbol{x}) dS(I)\\
=&\frac{-2}{\omega_{m-1}}\int_{\mathbb{S}^+}\bigg[\int_{\Omega_{I}}(S^{-1}(\boldsymbol{q}_l,\boldsymbol{x})G)g(\boldsymbol{x})dV_I(\boldsymbol{x})-\int_{\partial\Omega_{I}}S^{-1}(\boldsymbol{q}_l,\boldsymbol{x})\boldsymbol{n}(\boldsymbol{x})g(\boldsymbol{x})d\sigma_I(\boldsymbol{x}) \bigg]dS(I)\\
=&\frac{2}{\omega_{m-1}}\int_{\mathbb{S}^+}\int_{\partial\Omega_{I}}S^{-1}(\boldsymbol{q}_l,\boldsymbol{x})\boldsymbol{n}(\boldsymbol{x})g(\boldsymbol{x})d\sigma_I(\boldsymbol{x}) dS(I)\\
=&\int_{\partial\Omega_{D}}K(\boldsymbol{q}_l,\boldsymbol{x})\boldsymbol{n}(\boldsymbol{x})g(\boldsymbol{x})d\sigma(\boldsymbol{x})=F_{\partial\Omega_{D}}(\mathrm{tr}g)(\boldsymbol{q}_l),
\end{align*}
 where the last second equation comes from the fact that $d\sigma(\boldsymbol{x})=|\underline{\bx}|^{m-1}d\sigma_I(\boldsymbol{x})$ and $\text{tr}g$ denotes the trace of $g$. Therefore, we have $F_{\partial\Omega_D}( $tr$g) = 0$ in $\mathbb{R} _* ^{m+1}\backslash\overline {\Omega_D}$ for continuity. Then, the Plemelj formula in Theorem \ref{Pl} tells us that
\begin{align*}
S_{\partial\Omega_D}\text{tr}g(\bq)=\frac{\text{tr}g(\bq)}{2},\ \text{for all}\ \bq\in\partial\Omega_D.
 \end{align*}
 Therefore, with Corollary \ref{ext}, the trace $\text{tr}g$ can be slice monogenicly extended into the domain $\Omega_D$. Here, we denote the continuation by $h$. Then, we have $\text{tr}_{\partial\Omega_D}g=\text{tr}_{\partial\Omega_D}h$ and the trace operator $\text{tr}_{\partial\Omega_D}$ describes the restriction onto the boundary $\partial\Omega_D.$ Now, we denote $\omega:=g-h$, and obviously we have $\text{tr}_{\partial\Omega_{D}}\omega= 0$ and we get $\omega\in  L^p_0(\Omega_D).$ Next, we prove that
$G\omega=Gg=|\ux|^{m-1}f\in \mathcal{S}(\Omega_D)$. Indeed, since $f\in \mathcal{S}(\Omega_D)$, we suppose $f$ is induced by the stem function $F(z)=F_1(z)+iF_2(z)$, where $F_1,F_2$ satisfy the even-odd conditions given in \eqref{evenodd}. One can easily check that the functions $H_1$ and $H_2$, defined by \begin{align*}
H_1(z):=\bigg\vert\frac{z-\overline{z}}{2i}\bigg\vert^{m-1}F_1(z),\ H_2(z):=\bigg\vert\frac{z-\overline{z}}{2i}\bigg\vert^{m-1}F_2(z),
\end{align*}
also satisfy the even-odd conditions. Further, $|\ux|^{m-1}f$ is induced by the stem function $H(z)=H_1(z)+iH_2(z)$, which justifies that $G\omega=Gg=|\ux|^{m-1}f\in \mathcal{S}(\Omega_D)$ and this completes the proof.
\end{proof}
\begin{remark}
It is worth pointing out that the condition $p>\max\{m,2\}$ in the theorem above guarantees that $|\ux|^{1-m}G \CL^p_0(\Omega_D)\subset L^p(\Omega_D)$ with a similar argument as in Proposition \ref{BoundT}. Further, for the second component in the decomposition, there are no differentiability problems for $G \CL^p_0(\Omega_D)$. More specifically, in the proof, we already pointed out this component consists of $T_{\Omega_D}(|\ux|^{m-1}f)( \bx)$, which is differentiable given in Lemma \ref{Lemma1}, and $h$, which is also differentiable because of the slice monogenic extension.
\end{remark}
 The Hodge decomposition gives rise to two orthogonal projections on the corresponding subspaces $\mathcal{A}^{p}(\Omega_{D})$ and $Y$, i.e., we have
\begin{align*}
&\bo{P}:\:\mathcal{L}^{p}(\Omega_{D})\longrightarrow\mathcal{A}^{p}(\Omega_{D}),\\
&\bo{Q}:\:\mathcal{L}^{p}(\Omega_{D})\longrightarrow |\ux|^{1-m}G \CL^p_0(\Omega_D).
\end{align*}
Moreover, the projection $\bo{P}$ can be considered as a generalization of the classical Bergman projection in the slice regular function theory. Next, we give a connection between the image of the operator $\bo{Q}$ and the boundary value of $T_{\Omega_D}|\ux|^{m-1}f.$
 \begin{proposition}
Let $\Omega_D\subset \mathbb{R}_* ^{m+ 1}$ be a bounded axially symmetric domain. A slice function $f:\Omega_D\longrightarrow\Clm$ belongs to $\text{im}\boldsymbol{Q}$ if and only if $\text{tr}_{\partial\Omega_D}T_{\Omega_D}|\ux|^{m-1}f=0$, where $\text{im}\boldsymbol{Q}$ is the image of the operator $\boldsymbol{Q}$ and $\text{tr}_{\partial\Omega_D}f$ is the restriction of $f$ onto $\partial\Omega_{D}$.
\end{proposition}
\begin{proof}
The proof is similar as the argument given in \cite[Proposition 8.9]{23}. Firstly, let $f\in \text{im}\boldsymbol{Q}$, then there exists a function $g\in L_0^{p}(\Omega_D)$, such that $f=|\ux|^{1-m}Gg$. Then, the Borel-Pompeiu formula given in Theorem \ref{BPF} tells us that
\begin{align*}
T_{\Omega_{D}}|\ux|^{m-1}f=T(Gg)=g-F_{\partial\Omega_{D}}g=g,
\end{align*}
therefore, $\mathrm{tr}_{\partial\Omega_{D}}T_{\Omega_{D}}|\ux|^{m-1}f= \mathrm{tr}_{\partial\Omega_{D}}g= 0.$
\par
Vice versa, we assume that $\mathrm{tr}_{\partial\Omega_D}T_{\Omega_D}|\ux|^{m-1}f= 0, $ which is the same as
\begin{align*}
\mathrm{tr}_{\partial\Omega_{D}}T_{\Omega_{D}}\boldsymbol{P}|\ux|^{m-1}f+\mathrm{tr}_{\partial\Omega_{D}}T_{\Omega_{D}}\boldsymbol{Q}|\ux|^{m-1}f=0.
\end{align*}
Obviously, $\text{tr}_{\partial\Omega_D}T_{\Omega_D}\boldsymbol{Q}|\ux|^{m-1}f=0$, so we have $\text{tr}_{\partial\Omega_D}T_{\Omega_D}\boldsymbol{P}|\ux|^{m-1}f= 0$. Further, Proposition \ref{p38} tells us that $T_{\Omega_D}\boldsymbol{P}|\ux|^{m-1}f\in L^{p}(\Omega_D).$ Hence, we have $T_{\Omega_D}\boldsymbol{P}|\ux|^{m-1}f\in L_{0}^{p}(\Omega_{D}).$ Then, we can get $G( T_{\Omega_{D}}\bo{P}|\ux|^{m-1}f) = \bo{P}|\ux|^{m-1}f\in \text{im}\bo{Q}.$ However, we obviously have $G( T_{\Omega_{D}}\bo{P}|\ux|^{m-1}f) = \bo{P}|\ux|^{m-1}{f} \in \text{im}\bo{P}$, and $\text{im}\bo{P}\cap \text{im}\bo{Q}= 0$  leads to $\bo{P}|\ux|^{m-1}f= 0$ for all $\bx\in\Omega_D\subset\mathbb{R}^{m+1}_*$.  This implies that $|\ux|^{m-1}f\in\text{im}\bo{Q}$, which is equivalent to $f\in\text{im}\bo{Q}$.
\end{proof}

\begin{remark}
One can notice that the discussions above require the domain $\Omega_D\subset\R_*^{m+1}$. In other words, $\Omega_D$ does not intersect the real line $\R$. As we mentioned in the very beginning, this restriction comes from the definition of $G$, which requires $|\underline{\bx}|\neq 0$. We notice that the works done in \cite{18,GP} suggest that our discussions can possibly be generalized to the case of domains intersecting the real line with the slice derivative operator $\frac{\partial}{\partial \bx^c}$. However, it is not a straightforward generalization to the case of domains intersecting the real line, and we will discuss it in an upcoming paper.
\end{remark}
\bmhead{Acknowledgements}
Chao Ding is supported by the National Natural Science Foundation of China (No. 12271001), the Natural Science Foundation of Anhui Province (No. 2308085MA03), and the Excellent University Research and Innovation Team in Anhui Province (No. 2024AH010002). Zhenghua Xu is supported by the Natural Science Foundation of Anhui Province (No. 2308085MA04).
\bmhead{Statements and Declarations}
No potential conflict of interest was reported by the authors. No datasets were generated or analyzed during the current study.


\end{document}